\newtheorem{theorem}{Theorem}[section]
\newtheorem{lemma}[theorem]{Lemma}
\theoremstyle{definition}
\theoremstyle{remark}
\newtheorem*{remarks}{Remarks}
\newtheorem{claim}{Claim}
\newcommand{\remove}[1]{ }
\def\be{\begin{equation}}
\def\ee{\end{equation}}
\def\ba{\begin{eqnarray}}
\def\ea{\end{eqnarray}}
\numberwithin{equation}{section}
\begin{document}
\title[Control with integral condition for a dispersive system]{Control results with overdetermination condition for higher order dispersive system}
\author[Capistrano--Filho]{Roberto de A. Capistrano--Filho}
\address{Departamento de Matemática, Universidade Federal de Pernambuco (UFPE), 50740-545, Recife-PE, Brazil.}
\email{roberto.capistranofilho@ufpe.br}
\author[de Sousa]{Luan Soares de Sousa}
\email{luan.soares@ufpe.br}
\subjclass[2010]{Primary: 49N45, 35Q93, 93B05  Secondary: 37K10}
\keywords{Controllability, Initial-boundary value problem, Overdetermination condition, Kawahara equation}

\begin{abstract}
In recent years, controllability problems for dispersive systems have been extensively studied. This work is dedicated to proving a new type of controllability for a dispersive fifth order equation that models water waves, what we will now call the \textit{overdetermination control problem}. Precisely, we are able to find a control acting at the boundary that guarantees that the solutions of the problem under consideration satisfy an overdetermination integral condition. In addition, when we make the control act internally in the system, instead of the boundary, we are also able to prove that this condition is satisfied. These problems give answers that were left open in \cite{CaGo} and present a new way to prove boundary and internal controllability results for a fifth order KdV type equation.
\end{abstract}
\maketitle

\section{Introduction\label{Sec0}} 
\subsection{Setting of the problem} The Kawahara equation proposed in 1972 by T. Kawahara \cite{Kawahara} is a fifth-order Korteweg-de Vries equation (KdV) that can be viewed as a generalization of the KdV equation, which occurs in the theory of shallow water waves and take the form  \be\label{kaw} u_t+u_x+ u_{xxx}+ \alpha u_{xxxxx}+uu_x=0, \ee when $\alpha=-1$ and $u=u(t,x)$ is a real-valued function of two real variables $(t,x)$. It is important to point out that there are others physical background of Kawahara equation or in view of perturbed equation of KdV\footnote{Considering $\alpha=0$ in \eqref{kaw} we have the so-called KdV equation, for a historic review of this equation we can cite \cite{Bona1} and the reference therein.} and the authors suggest to reader see \cite{Boyd,Hunter,Pomeau}, among others.

\vspace{0.1cm}

In this article we will be interested with a kind of a control property to the Kawahara equation when an \textit{integral overdetermination condition} is required, namely
\begin{equation}\label{IO}
\int_{0}^{L}u(t,x)\omega(x)dx= \varphi(t), \ t \in [0,T],
\end{equation}
with some known functions $\omega$ and $\varphi$. To present the problem, let us consider the Kawahara equation in the bounded rectangle $Q_T = (0, T) \times (0,L)$, where $T$ and $L$ are positive numbers with boundary function $h_i$, for $i=1,2,3,4$ and $h$ or the right-hand side $f$ of a special form to specify latter, namely, 
\begin{equation}\label{int1}
\left\lbrace
\begin{array}{llr}
u_{t} + u_{x} +u_{xxx} - u_{xxxxx}+uu_x= f(t,x) & \mbox{in} \ Q_{T}, \\ 
u(t,0)=h_{1}(t), \ u(t,L)= h_{2}(t), \ u_{x}(t,0)= h_{3}(t),& \mbox{in} \ [0,T], \\
u_{x}(t,L)=h_{4}(t), \ u_{xx}(t,L) = h(t) & \mbox{in} \ [0,T],\\
 u(0,x) = u_{0}(x) & \mbox{in} \ [0,L].
\end{array}\right. 
\end{equation} 

Thus, we are interested in studying two control problems, which we will call them from now on by \textit{overdetermination control problem}. The first one can be read as follows:

\vspace{0.2cm}
\noindent\textbf{Problem $\mathcal{A}$:} For given functions $u_0$, $h_i$, for $i=1,2,3,4$ and $f$ in some appropriated spaces, can we find a boundary control $h$ such that the solution associated to the equation \eqref{int1} satisfies the integral overdetermination \eqref{IO}?
\vspace{0.2cm}

The second problem of this work is concentrated to prove that for a special form of the function 
\begin{equation}\label{f}
f(t,x)=f_0(t)g(t,x), \quad (t,x)\in Q_T,
\end{equation}  
the integral overdetermination \eqref{IO} is verified, in other words.

\vspace{0.2cm}
\noindent\textbf{Problem $\mathcal{B}$:} For given functions $u_0$, $h_i$, for $i=1,2,3,4$, $h$ and $g$ in some appropriated spaces, can we find a internal control $f_0$ such that the solution associated to the equation \eqref{int1} satisfies the integral overdetermination \eqref{IO}?
\vspace{0.2cm}

Therefore, the main purpose of this paper is to prove that these problems are indeed true. There are basically two features to be emphasized in this way:
\begin{itemize}
\item One should be convinced that the integral overdetermination condition is effective and gives good (internal and boundary) control properties. In fact, this kind of condition is very important in the inverse problem (see e.g. \cite{PriOrVas}) and it is a new way of controlling dispersive systems. 
\vspace{0.1cm}
\item One should be capable of controlling the system when the control acts in $[0,T]$, which is also new for the Kawahara equation (see for instance \cite{CaGo} for details of internal control problems for Kawahara equation).
\end{itemize}

\subsection{Bibliographical comments} We comment briefly on the bibliography emphasizing the works related with the \textit{well-posedness} and \textit{controllability}. Before presenting it, we caution that this is only a small sample of the extant works existent for the Kawahara equation since there are other subjects of interest from a mathematical point of view.

\subsubsection{Well-posedness results} Regarding the Cauchy problem some authors showed the local and global well-posedness results. For example, Kenig \textit{et al.} \cite{kepove} proved the well-posedness result for a general nonlinear dispersive equation, which one with some restrictions, can be viewed as \eqref{kaw}. In this celebrated work, the authors are able to prove that the associated initial value problem (IVP) is locally well-posed in weighted Sobolev spaces. We would like to mention that in \cite{CaGo,khanal} the authors also treated the theory of well-posedness in weighted Sobolev spaces for the Kawahara equation. Recently, Cui \textit{et al.} \cite{Cui} studied the Cauchy problem of the Kawahara equation  in $L^2$-space, precisely, they proved the global well-posedness for \eqref{kaw}. Considering the initial boundary value problem (IBVP) we can see relevant advances in \cite{Doronin2}, for homogeneous boundary conditions, and in \cite{FaOp}, for the half-line. In addition to these works, some other works treat the well-posedness theory, we can cite, for example, \cite{Faminskii,ponce}.

\subsubsection{Controllability results} As is well known the control theory can be studied in two ways: Stabilization problem and internal or boundary control problems (see \cite{zhang1,CaKawahara} for details of these kinds of issues).

In this spirit, let us start to mention a pioneer work concerning the stabilization property for the Kawahara equation. In \cite{CaKawahara}, the first author with some collaborators were able to introduce an internal feedback law in \eqref{int1}, considering the nonlinearity $u^2u_x$ instead of $uu_x$ and $h(t)=h_i(t)=0$, for $i=1,2,3,4$. Being precise, they proved that under the effect of the damping mechanism the energy associated with the solutions of the system decays exponentially. Additionally, they conjecture the existence the existence of an important phenomena, is the so-called \textit{critical set phenomenon} as occurs with the single  KdV equation \cite{CaZh,Rosier} and the Boussinesq KdV-KdV system \cite{CaPaRo1}\footnote{Differently what happens with KdV and Boussinesq KdV-KdV the characterization of the critical set for the Kawahara equation is an open issue, we cite \cite{Vasconcellos} for details of this subject.}. We also would like to suggest to the reader the reference \cite{Doronin} to stabilization problems related to the Kawahara equation in the real line.

Now, some references of internal control problems are presented. This problem was first addressed in \cite{zhang} and after that in \cite{zhang1}. In both cases the authors considered the Kawahara equation in a periodic domain $\mathbb{T}$ with a distributed control of the form \[ f(t,x)=(Gh)(t,x):= g(x)(  h(t,x)-\int_{\mathbb{T}}g(y) h(t,y) dy), \] where $g\in C^\infty (\mathbb T)$ supported in $\omega\subset\mathbb{T}$ and $h$ is a control input. Here, it is important to observe that the control in consideration has a different form as presented in \eqref{f}, and the result is proven in a different direction from what we will present in this manuscript.

Still related with internal control issues, Chen \cite{MoChen} presented results considering the Kawahara equation \eqref{int1} posed on a bounded interval with a distributed control $f(t,x)$ and homogeneous boundary conditions. She showed the result taking advantage of a Carleman estimate associated to the linear operator of the Kawahara equation with an internal observation. With this in hand, she was able to get a null controllable result when $f$ is effective in a $\omega\subset(0,L)$.  As the results obtained by her do not answer all the issues of the internal controllability, in a recent article \cite{CaGo} the authors closed some gaps left in \cite{MoChen}. Precisely, considering the system \eqref{int1} with an internal control $f(t,x)$ and homogeneous boundary conditions, the authors are able to show that the equation in consideration is exact controllable in $L^2$-weighted Sobolev spaces and, additionally, the Kawahara equation is controllable by regions on $L^2$-Sobolev space, for details see \cite{CaGo}.

Finally, related to the boundary control problem, there is a unique result which one was proved in \cite{GG}. The authors consider the boundary conditions as in \eqref{int1} and show that exact controllability holds when two or until five controls are inputting in these boundary conditions.  

\subsection{Notations and Main results} With these previous results in hand, we are able to present our main results that tries to answer questions left open in the manuscript \cite{CaGo} and presents an alternative way for the boundary control problems of the Kawahara equation. First of all, let us introduce the following notation that we will use in the article from now on.
\begin{itemize}
\item[i.] Denote by $$X(Q_{T})= C([0,T]; L^{2}(0,L)) \cap L^{2}(0,T; H^{2}(0,L)),$$ the space equipped with the following norm
$$\|v\|_{X(Q_{T})} = \displaystyle\max_{t \in [0,T]}\|v(t,\cdot)\|_{L^{2}(0,L)} + \|v_{xx}\|_{L^{2}(Q_{T})}= \displaystyle\|v\|_{C([0,T];L^{2}(0,L))} + \|v_{xx}\|_{L^{2}(Q_{T})}.$$
\item[ii.] Consider $$\mathcal{H}= H^{\frac{2}{5}}(0,T)\times H^{\frac{2}{5}}(0,T)\times H^{\frac{1}{5}}(0,T)\times H^{\frac{1}{5}}(0,T),$$ with the norm
$$ \|\widetilde{h}\|_{\mathcal{H}}= \|h_{1}\|_{H^{\frac{2}{5}}(0,T)} + \|h_{2}\|_{H^{\frac{2}{5}}(0,T)} +\|h_{3}\|_{H^{\frac{1}{5}}(0,T)} +\|h_{4}\|_{H^{\frac{1}{5}}(0,T)},$$
where $\widetilde{h}=(h_1,h_2,h_3,h_4)$.
\item[iii.] The intersection $(L^{p} \cap L^{q})(0,T)$ will be considered with the following norm  $$\|\cdot ,\cdot \|_{(L^{p} \cap L^{q})(0,T)}= \|\cdot ,\cdot \|_{L^{p}(0,T)} + \|\cdot , \cdot \|_{L^{q}(0,T)}.$$ 
\item[iv.] Finally, for any $p \in [1, \infty]$, we denote by  $$\widetilde{W}^{1,p}(0,T)= \{\varphi \in W^{1,p}(0,T); \varphi(0)= 0\},$$ with the norm defined by 
$$ \|\varphi\|_{\widetilde{W}^{1,p}(0,T)}= \|\varphi'\|_{L^{p}(0,T)}.$$
\item[vi.] Consider $\omega$ be a fixed function which belongs to the following set 
\begin{equation}\label{jota}
\mathcal{J}= \{\omega \in H^{5}(0,L)\cap H^{2}_{0}(0,L); \ \omega''(0)=0 \}.
\end{equation}
\end{itemize}

The first result of the manuscript gives us an answer for the Problem $\mathcal{A}$, presented in the beginning of the introduction. The answer for the boundary overdetermination control problem for the system \eqref{int1} can be read as follows.

\begin{theorem}\label{main} Let $p \in [2,\infty]$. Suppose that $u_{0} \in L^{2}(0,L)$,  $f \in  L^{p}(0,T; L^{2}(0,L))$, $\widetilde{h} \in \mathcal{H}$ and $h_i\in L^p(0,T)$, for $i=1,2,3,4$. If $\varphi \in L^{p}(0,T)$ and $\omega \in \mathcal{J}$ are such that  $\omega''(L)\neq 0$
and
\begin{equation}\label{OIa}
\int_{0}^{L}u_{0}(x)\omega(x)dx= \varphi(0),
\end{equation}
considering $c_{0}= \|u_{0}\|_{L^{2}(0,L)} + \ \|f\|_{L^{2}(0,T; L^{2}(0,L))} + \ \|\widetilde{h}\|_{\mathcal{H}} + \ \|\varphi'\|_{L^{2}(0,T)}$, the following assertions hold true.
\begin{itemize}
\item[1.] For a fixed $c_{0}$, there exists $T_{0} > 0$ such that for $T \in (0,T_{0}]$, then we can find a unique function $h \in L^{p}(0,T)$ in such a way that the solution $u\in X(Q_{T})$ of \eqref{int1} satisfies \eqref{IO}.

\vspace{0.2cm}

\item[2.]For each $T > 0$ fixed, exists a constant $\gamma > 0$ such that for $c_{0} \leq \gamma,$ then we can find a unique boundary control $h \in L^{p}(0,T)$ with the solution $u \in X(Q_{T})$ of \eqref{int1} satisfying \eqref{IO}.
\end{itemize}
\end{theorem}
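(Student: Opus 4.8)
The plan is to reduce the control problem to a fixed point by differentiating the overdetermination condition \eqref{IO} in time and using the equation to express the unknown boundary control $h$ explicitly in terms of the solution $u$ and the data. Set $\psi(t)=\int_{0}^{L}u(t,x)\omega(x)\,dx$. Multiplying the first equation of \eqref{int1} by $\omega$, integrating over $(0,L)$ and integrating by parts (five times on $u_{xxxxx}$, three times on $u_{xxx}$, once on $u_{x}$ and once on $uu_{x}=\tfrac12(u^{2})_{x}$, which is licit since $\omega\in H^{5}(0,L)$), I would obtain an identity of the form
\[
\psi'(t)=\int_{0}^{L}u\,(\omega'+\omega'''-\omega''''')\,dx+\tfrac12\int_{0}^{L}u^{2}\,\omega'\,dx+\int_{0}^{L}f\,\omega\,dx+\omega''(L)\,h(t)+B(t),
\]
where $B(t)$ collects the boundary contributions of $h_{1},h_{2},h_{3},h_{4}$. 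The decisive point is the membership $\omega\in\mathcal J$: the conditions $\omega(0)=\omega(L)=\omega'(0)=\omega'(L)=0$ annihilate every boundary term carrying the \emph{unprescribed} traces $u_{xxx}(t,\cdot)$ and $u_{xxxx}(t,\cdot)$, while the extra condition $\omega''(0)=0$ removes precisely the remaining $u_{xx}(t,0)$ term; what survives multiplying the control $h(t)=u_{xx}(t,L)$ is exactly $\omega''(L)$.

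Since $\omega''(L)\neq0$ by hypothesis, I would solve this identity for $h$ and define
\[
\Gamma(u)(t)=\frac{1}{\omega''(L)}\Big(\varphi'(t)-\int_{0}^{L}u\,(\omega'+\omega'''-\omega''''')\,dx-\tfrac12\int_{0}^{L}u^{2}\omega'\,dx-\int_{0}^{L}f\,\omega\,dx-B(t)\Big).
\]
Using $\omega\in H^{5}(0,L)$, Cauchy--Schwarz and $\|u(t,\cdot)\|_{L^{2}}\le\|u\|_{C([0,T];L^{2})}$, I would check that $\Gamma$ maps $X(Q_{T})$ into $L^{p}(0,T)$: the linear terms are bounded by $C\|u\|_{X(Q_{T})}$, the quadratic term by $C\|u\|_{X(Q_{T})}^{2}$, and the data terms by $\|\varphi'\|_{L^{p}(0,T)}+\sum_{i}\|h_{i}\|_{L^{p}(0,T)}$; for $p<\infty$ the time integration produces a favourable factor $T^{1/p}$ multiplying the $u$-dependent contributions.

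The solution map is then $\Lambda(u)=$ the unique $X(Q_{T})$-solution of \eqref{int1} with boundary datum $h=\Gamma(u)$, whose existence and estimate $\|\Lambda(u)\|_{X(Q_{T})}\le C\big(\|u_{0}\|_{L^{2}(0,L)}+\|f\|_{L^{p}(0,T;L^{2}(0,L))}+\|\widetilde h\|_{\mathcal H}+\|\Gamma(u)\|_{L^{p}(0,T)}\big)$ I would take from the well-posedness theory for \eqref{int1} (linear trace estimates plus a standard contraction absorbing $uu_{x}$). Choosing a radius $R\sim c_{0}$, I would show that the ball $\{\|u\|_{X(Q_{T})}\le R\}$ is invariant and that $\Lambda$ is a contraction there: the difference $\Lambda(u_{1})-\Lambda(u_{2})$ solves a linear Kawahara system driven by $\Gamma(u_{1})-\Gamma(u_{2})$ and by $u_{1}u_{1,x}-u_{2}u_{2,x}$, and both are estimated by $\|u_{1}-u_{2}\|_{X(Q_{T})}$ times a small factor — a power of $T$ (yielding item~1, small time) or the smallness of $c_{0}$ (yielding item~2, small data). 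The unique fixed point $u^{\ast}$, together with the compatibility \eqref{OIa} which gives $\psi(0)=\varphi(0)$, forces $\psi'(t)=\varphi'(t)$ and hence $\psi\equiv\varphi$, i.e. \eqref{IO}; the control is $h=\Gamma(u^{\ast})$, unique because the fixed point is.

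The main obstacle I anticipate is the contraction step, since $\Gamma(u)$ feeds back into the boundary condition of the very equation defining $\Lambda$: the fixed-point estimate must absorb the boundary-control term through the well-posedness bound while simultaneously controlling the genuine nonlinearity $uu_{x}$ in the equation and the quadratic term $\int_{0}^{L}u^{2}\omega'\,dx$ inside $\Gamma$. Balancing these two nonlinear sources so that a single smallness (of $T$ for item~1, of $c_{0}$ for item~2) closes both the self-map and the contraction, uniformly for all $p\in[2,\infty]$, is the delicate part; a secondary technical point is justifying that $\psi\in\widetilde{W}^{1,p}(0,T)$ and that the integration-by-parts identity holds for the merely weak solution $u\in X(Q_{T})$, which I would settle by a density/approximation argument.
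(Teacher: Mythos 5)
Your derivation of the identity for $\psi'(t)$ and the role of $\omega\in\mathcal J$ (killing the unprescribed traces $u_{xxxx}$, $u_{xxx}$ and $u_{xx}(t,0)$, leaving only $\omega''(L)h(t)$) matches the paper's Lemma \ref{1.1}, and your fixed point for item 1 (small $T$) is workable. But your scheme collapses into a single fixed point on $X(Q_T)$ what the paper deliberately splits into two steps, and this creates a genuine gap in item 2. The feedback formula $\Gamma(u)$ contains the term $-\frac{1}{\omega''(L)}\int_0^L u\,(\omega'+\omega'''-\omega''''')\,dx$, which is \emph{linear} in $u$ with a coefficient depending only on $\omega$ and $T$. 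In the difference estimate for $\Lambda(u_1)-\Lambda(u_2)$ this term contributes $C(T,\omega)\|u_1-u_2\|_{X(Q_T)}$ to $\|\Gamma(u_1)-\Gamma(u_2)\|_{L^p(0,T)}$, and after feeding it back through the well-posedness bound you get a Lipschitz constant of the form $C(T)\,C(T,\omega)$ that does \emph{not} shrink as $c_0\to 0$. Your claim that both difference terms are ``$\|u_1-u_2\|_X$ times a small factor --- a power of $T$ \emph{or} the smallness of $c_0$'' is therefore false for the linear feedback term: only the quadratic contributions are controlled by $c_0$. For a fixed, possibly large $T$, the contraction does not close, so item 2 is not proved.

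The paper avoids this precisely by first inverting the \emph{linear} control-to-observation map exactly, for arbitrary $T$ and with no smallness: in Lemma \ref{lema3} the operator $A h=\varphi'-\int_0^L u(\omega'+\omega'''-\omega''''')\,dx$, with $u=S(0,h,0,0)$, is shown to be a contraction on $L^p(0,T)$ in an exponentially weighted norm $e^{-\gamma t}$, exploiting the causal (Volterra-type) energy estimate $\|u(t,\cdot)\|_{L^2(0,L)}\le\|h\|_{L^2(0,t)}$ of Lemma \ref{lem1.1}; taking $\gamma$ large makes the Lipschitz constant arbitrarily small independently of $T$. This yields a bounded inverse $\Gamma=\Lambda^{-1}$ for every $T$, and only then is an outer fixed point run on $X(Q_T)$ in which the \emph{only} terms to absorb are genuinely quadratic ($vv_x$ and $v^2/2$, estimated as in \eqref{43}), so smallness of either $T$ or $c_0$ suffices. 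To repair your argument you would need to either adopt this two-step structure or import the same exponential-weight/Volterra device into your single fixed point; as written, the linear feedback cannot be absorbed in the small-data regime.
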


The next result ensures for the first time that we are able to control the Kawahara equation with a function $f_0$ supported in $[0,T]$. Precisely, we will respond to the Problem $\mathcal{B}$ mentioned in this introduction.

\begin{theorem}\label{main1}
Let $p \in [1,\infty]$, $u_{0} \in L^{2}(0,L)$,  $h \in  L^{max\{2,p\}}(0,T; L^{2}(0,L))$, $ \widetilde{h} \in \mathcal{H}$ and $h_i\in L^p(0,T)$, for $i=1,2,3,4$. If $\varphi \in L^{p}(0,T)$, $ g \in C([0,T]; L^{2}(0,L))$ and $\omega \in \mathcal{J}$ are such that $\omega''(L)\neq 0,$ and there exists a positive constant $g_0$ such that \eqref{OIa} is satisfied and $$\left|\int_{0}^{L}g(t,x)\omega(x)dx\right| \geq g_{0} > 0,$$
considering $c_{0}= \|u_{0}\|_{L^{2}(0,L)} + \ \|h\|_{L^{2}(0,L)} + \ \|\widetilde{h}\|_{\mathcal{H}} + \ \|\varphi'\|_{L^{1}(0,T)}$, we have that:
\begin{itemize}
\item[1.] For a fixed $c_{0}$, so there exists $T_{0} > 0$ such that for $T \in (0,T_{0}]$, exists a unique $f_{0} \in L^{p}(0,T)$ and a solution $u \in X(Q_{T})$ of  \eqref{int1}, with $f$ defined by \eqref{f}, satisfying \eqref{IO}.

\vspace{0.2cm}

\item[2.] For a fixed $T > 0$, there exists a constant $\gamma > 0$ such that for $c_{0} \leq \gamma,$ we have the existence of a control input  $f_{0} \in L^{p}(0,T)$ which the solution $u \in X(Q_{T})$ of \eqref{int1}, with $f$ as in \eqref{f},  verifies \eqref{IO}.
\end{itemize}
\end{theorem}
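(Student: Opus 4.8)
My plan is to turn the overdetermination requirement \eqref{IO} into a pointwise-in-time identity that prescribes the internal control $f_{0}$, and then to solve the resulting self-referential (nonlocal) problem, in the spirit of Theorem \ref{main} but now inverting the coefficient $G(t)=\int_{0}^{L}g(t,x)\omega(x)\,dx$ instead of the boundary coefficient $\omega''(L)$. First I would test the equation in \eqref{int1} (with $f=f_{0}(t)g(t,x)$) against the weight $\omega$ and integrate over $(0,L)$. Because $u$ is only an $X(Q_{T})$-solution, I would not differentiate $u$; instead I would move all five spatial derivatives onto $\omega$ by integration by parts. The defining properties of $\mathcal{J}$ in \eqref{jota}, namely $\omega(0)=\omega(L)=\omega'(0)=\omega'(L)=\omega''(0)=0$, are exactly what annihilate every boundary trace of $u$ that is \emph{not} prescribed in \eqref{int1}; in particular the unprescribed trace $u_{xx}(t,0)$ drops out precisely because $\omega''(0)=0$. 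What survives is
\[
\frac{d}{dt}\int_{0}^{L}u\,\omega\,dx
=\int_{0}^{L}u\bigl(\omega'+\omega'''-\omega^{(5)}\bigr)\,dx
+\tfrac{1}{2}\int_{0}^{L}u^{2}\omega'\,dx
+B(t)+f_{0}(t)G(t),
\]
where $B(t)$ is a fixed linear combination of the given data $h_{1},h_{2},h_{3},h_{4},h$ with coefficients built from $\omega''(L),\omega'''(0),\omega'''(L),\omega''''(0),\omega''''(L)$. This identity holds in the sense of distributions in $t$ and is meaningful for weak solutions since $\omega\in H^{5}(0,L)$.

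Since $\abs{G(t)}\ge g_{0}>0$, I can solve this identity for the control, setting
\[
f_{0}[u](t)=\frac{1}{G(t)}\Bigl[\varphi'(t)-B(t)-\int_{0}^{L}u\bigl(\omega'+\omega'''-\omega^{(5)}\bigr)\,dx-\tfrac{1}{2}\int_{0}^{L}u^{2}\omega'\,dx\Bigr].
\]
The key observation is that if $u$ solves \eqref{int1} with source $f_{0}[u]\,g$, then the identity forces $\frac{d}{dt}\int_{0}^{L}u\,\omega\,dx=\varphi'(t)$ automatically, and the compatibility \eqref{OIa} matches the two sides at $t=0$, so integrating in time yields exactly \eqref{IO}. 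Thus the overdetermination control problem is equivalent to finding a fixed point of the map $\Gamma$ that sends $v\in X(Q_{T})$ to the unique solution of \eqref{int1} with right-hand side $f_{0}[v]\,g$, the latter being furnished by the well-posedness theory for the direct problem (with source in $L^{\max\{2,p\}}(0,T;L^{2})$ and boundary data in $\mathcal{H}$).

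The fixed point is the heart of the matter. Dropping the quadratic term $\tfrac12\int u^{2}\omega'$, the map $\Gamma$ becomes affine and the system reduces to a \emph{linear nonlocal} Kawahara equation whose only new ingredient is the bounded rank-one feedback $-\frac{g(t,x)}{G(t)}\int_{0}^{L}u(\omega'+\omega'''-\omega^{(5)})\,dy$; using Duhamel's formula together with $\abs{\int u(\cdots)}\le C\norm{u(t)}_{L^{2}(0,L)}$ and $\abs{g/G}\le \norm{g}_{C([0,T];L^{2})}/g_{0}$, a Gronwall estimate gives existence, uniqueness and an a priori bound $\norm{u}_{X(Q_{T})}\le C(T)\,c_{0}$ on \emph{any} interval $[0,T]$. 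The genuinely nonlinear contributions — the transport term $uu_{x}$ in \eqref{int1} and the quadratic feedback $\int u^{2}\omega'$ in $f_{0}[u]$ — are then treated as perturbations: each produces a factor that is either a positive power of $T$ (yielding the self-map and contraction for $T\le T_{0}$ with $c_{0}$ fixed, which is item 1) or a small multiple of the ball radius (yielding them for $c_{0}\le\gamma$ with $T$ fixed, which is item 2). The contraction simultaneously delivers uniqueness of $f_{0}$, and a final check that $f_{0}[u]\in L^{p}(0,T)$ — inheriting its integrability from $\varphi'$, from $B$ (a combination of the $h_{i},h\in L^{p}$), and from $u\in X(Q_{T})$ via $\abs{G}\ge g_{0}$ — completes the argument.

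The main obstacle I anticipate is closing this fixed point against the self-referential feedback. For small $T$ (item 1) the power of $T$ makes everything, including the linear feedback, contractive at once. For fixed $T$ and small data (item 2) the linear feedback term carries an $O(1)$ constant, so contraction is not immediate: one must exploit the causal (Volterra) structure in time — equivalently the Duhamel--Gronwall bound above, or a step-by-step argument over short subintervals — to invert the linear nonlocal map on an arbitrary interval, and only afterward absorb the quadratic term $\int u^{2}\omega'$ and the transport nonlinearity as small perturbations controlled by $\gamma$. Managing the interplay of the time-integrability exponents ($p$, $\max\{2,p\}$, and the $L^{1}$ bound on $\varphi'$ recorded in $c_{0}$) across the full range $p\in[1,\infty]$ is the accompanying piece of bookkeeping.
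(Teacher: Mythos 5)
Your proposal is correct and follows essentially the same route as the paper: you derive the identity for $\frac{d}{dt}\int_0^L u\,\omega\,dx$ (the paper's Lemma \ref{1.1}), invert the coefficient $g_1(t)=\int_0^L g\,\omega\,dx$ using the lower bound $g_0$, solve the resulting linear nonlocal problem on an arbitrary time interval by exploiting the causal/Volterra structure (the paper does this via a contraction in the weighted norm $e^{-\gamma t}$ for the operator $A$ of Lemma \ref{lss}, which is equivalent to your Duhamel--Gronwall argument), and then absorb $uu_x$ and the quadratic feedback by a contraction on a ball in $X(Q_T)$ for small $T$ or small $c_0$. The only difference is organizational: the paper runs the inner fixed point on $f_0\in L^p(0,T)$ and the outer one on $u\in X(Q_T)$ with the nonlinearity fed in through the slot $f_2=-v^2/2$, whereas you merge these into a single fixed point on $u$.
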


\subsection{Heuristic of the article and further comments}In this article, we investigate and discuss overdetermination control problems with respect to boundary and internal variations. As can be seen in this introduction, the agenda of the research of control theory for the fifth order KdV equation is quite new and does not acknowledge many results in the literature. With this proposal to fill this gap, we intend to present a new way to prove internal and boundary control results for this system. Thus, for this type of \textit{integral overdetermination condition} the first results on the solvability of control problems for the Kawahara equation are obtained in the present paper.

\subsubsection{Heuristic of the article} The first result is concerning of the \textit{boundary overdetermination control problem}, roughly speaking, we are able to find an appropriate control $h$, acting on the boundary term $u_{xx}(t,L)$, such that integral condition \eqref{IO} it turns out. Theorem \ref{main} is first proved for the linear system associated to \eqref{int1} and after that, using a fixed point argument, extended to the nonlinear system. The main ingredients are the Lemmas \ref{1.1} and \ref{lema3}. In the Lemma \ref{lema3} we are able to find two appropriate applications that links the boundary control term $h(t)$ with the overdetermination condition \eqref{IO}, namely
\begin{equation*}
\begin{array}{lll}
\Lambda:  L^{p}(0,T)&\longrightarrow \widetilde{W}^{1,p}(0,T)\\
\quad \quad  \quad h &\longmapsto (\Lambda h)(\cdot)= \displaystyle \int_{0}^{L}u(\cdot,x)\omega(x)dx
\end{array}
\end{equation*}
and
\begin{equation*}
\begin{array}{lll}
A: L^{p}(0,T) &\longrightarrow L^{p}(0,T)\\
\quad \quad \quad h &\longmapsto \displaystyle (Ah)(t)= \varphi'(t) - \int_{0}^{L}u(t,x)(\omega'(x) + \omega'''(x) - \omega''''''(x))dx, \quad \forall t \in [0,T].
\end{array}
\end{equation*}
So, we  prove that such application $\Gamma$ has an inverse which one is continuous, by Banach's theorem, showing the lemma in question, and so, reaching our goal, to prove Theorem \ref{main}.

\vspace{0.1cm}

Theorem \ref{main1} follows the same idea, the strictly different point is related with the appropriated applications which in this case links the internal control $f_0$ with the overdetermination condition \eqref{IO} (see Lemma \ref{lss}), in this case,  defined as follows
$$ (\Lambda f_{0})(\cdot)= \int_{0}^{L}u(\cdot,x)\omega(x)dx
$$
and
$$(Af_{0})(t)= \frac{\varphi'(t)}{g_{1}(t)} - \frac{1}{g_{1}(t)}\int_{0}^{L}u(t,x)(\omega' + \omega''' - \omega''''')dx,
$$
where, $$ g_{1}(t)= \int_{0}^{L}g(t,x)\omega(x)dx.$$

\subsubsection{Further comments}To conclude this introduction, we outline additional comments. It is important to point out that the method used here is commonly applied to inverse problems in optimal control. For the readers we cite this excellent book \cite{PriOrVas} for details of the integral conditions applied in inverse problems. 

\vspace{0.1cm}

With respect the generality of the work, we have the following points: 

\begin{itemize}
\item Theorems \ref{main} and \ref{main1} can be obtained for more general nonlinearities. Indeed, if we consider $v \in X(Q_{T})$ and $p \in (2,4]$, we have that 
$$
\int_{0}^{T} \int_{0}^{L} |v^{p+2} |d x d t\leqslant C\left\|v\right\|_{C([0,T];L^{2}(0,L))}^{p} \int_{0}^{T}\left\|v_{x}\right\|^{2} d t \leqslant C\left\|v\right\|_{X(Q_{T})}^{p + 2},
$$
by the Gagliardo–Nirenberg inequality. Moreover, recently, Zhou \cite{Zhou} showed the well-posedness of the following initial boundary value problem
\begin{equation}\label{int2} 
\left\{\begin{array}{lll}
u_t-u_{xxxxx}=c_{1} uu_x+c_{2} u^{2}u_x+b_{1}  u_xu_{xx}+b_{2} uu_{xxx},& x \in(0, L), \ t \in \mathbb{R}^{+}, \\
u(t, 0)=h_{1}(t), \quad u(t, L)=h_{2}(t),  \quad u_x(t, 0)=h_{3}(t), &  t \in \mathbb{R}^{+},\\
 u_x(t, L)=h_{4}(t), \quad u_{xx}(t, L)=h(t), &  t \in \mathbb{R}^{+},\\
u(0, x)=u_{0}(x),& x \in(0, L),
\end{array}\right.
\end{equation}
Thus, due to the previous inequality and the results proved in \cite{Zhou}, when we consider $b_1=b_2=0$ and the combination $c_1uu_{x}+c_2u^2u_x$ instead of $uu_x$ on \eqref{int1}, Theorems \ref{main} and \ref{main1} remains valid, however, with sake of simplicity, we consider only the nonlinearity as $uu_x$. 
\vspace{0.1cm}
\item Note that, in this manuscript, the regularity of the boundary terms are sharp in $H^s(0,L)$, for $s\geq0$. In fact, due the method introduced by Bona \textit{et al.} \cite{Bona1} for KdV equation the authors in \cite{zhang2} and \cite{Zhou} are able to provide sharp regularity for the traces function in both IBVP \eqref{int1} and \eqref{int2}. So, in this sense, Theorems \ref{main} and \ref{main1} gives a sharp regularity of the functions involved.
\vspace{0.1cm}
\item Unlike what happens in the case of the control problem considered in \cite{Rosier} for KdV equation, in \cite{CaPaRo1} for Boussinesq KdV--KdV equation and which was conjectured by the first author in \cite{CaKawahara}, here, due the method used, we can take $h_i = 0$, for $i=1,2,3,4$, and only consider a control acting in the trace $u_{xx}(t,L)$, without concern with the critical set phenomenon.
\vspace{0.1cm}
\item The arguments presented in this work have prospects to be applied for other nonlinear dispersive equations in the context of the bounded domains. In fact, our motivation was due to the fact that Faminskii \cite{Fa1} proved a result for the KdV equation, that is, when considering the system \eqref{kaw}  with $\alpha=0$. However, note that in \cite{Faminskii} the author decides to use the solution in a weak sense, ensuring that the results are verified for the function $\frac{u^2}{2}$, but, in our case, we can deal with more general the terms like $\frac{u^2}{2}$, $uu_x$ and $u^2u_x$.
\vspace{0.1cm}
\item Finally, this work presents another way to prove control results for the higher order dispersive system which are completely different from what was presented in \cite{CaGo,GG,zhang1}.
\end{itemize}

\subsection{Outline of the work} Section \ref{Sec1} is devoted to review the main results of the well-posedness for the fifth order KdV equation in Sobolev spaces. In the Section \ref{Sec2} we present two auxiliary lemmas which help us to prove the controllability results. The overdetermination control results, when the control is acting in the boundary and internally, are presents in the Sections \ref{Sec3} and \ref{Sec4}, respectively, that is, we will present the proof of the main results of the manuscript, Theorems \ref{main} and \ref{main1}.

\section{A fifth order KdV equation: A review of well-posedness results\label{Sec1}}
In this section let us treat the well-posedness of the fifth order KdV equation, that is, we are interested in the well-posedness of following system  
\begin{equation}\label{IBVP}
\left\lbrace
\begin{array}{llr}
u_{t} + u_{x} +u_{xxx} - u_{xxxxx}+uu_x= f(t,x) & \mbox{in} \ Q_{T}, \\ 
u(t,0)=h_{1}(t), \ u(t,L)= h_{2}(t), \ u_{x}(t,0)= h_{3}(t),& \mbox{in} \ [0,T], \\
u_{x}(t,L)=h_{4}(t), \ u_{xx}(t,L) = h(t) & \mbox{in} \ [0,T],\\
 u(0,x) = u_{0}(x) & \mbox{in} \ [0,L],
\end{array}\right. 
\end{equation} 
where $L, T >0$  are fixed real numbers, $Q_{T}= [0,T]\times[0,L]$ and $u_{0}, h_{1}, h_{2},h_{3}, h_{4}, h$ and $f$ are well-known functions. Precisely, we will put together the mains results of well-posedness to \eqref{IBVP}. 

\subsection{Homogeneous case} The first result is due to the first author \cite[Lemma 2.1]{CaKawahara} and provided the well-posedness results for the linear problem
\begin{equation}\label{h2}
\left\lbrace
\begin{array}{llr}
u_{t} + u_{x} +u_{xxx} - u_{xxxxx}+uu_x= 0& \mbox{in} \ Q_{T}, \\ 
u(t,0)=u(t,L)=u_{x}(t,0)= u_{x}(t,L)= u_{xx}(t,L) = 0 & \mbox{in} \ [0,T],\\
 u(0,x) = u_{0}(x) & \mbox{in} \ [0,L].
\end{array}\right. 
\end{equation} 

\begin{lemma}
\label{H1}Let $u_{0}\in L^{2}\left(  0,L\right)  $. Then (\ref{h2}) possesses 
a unique (mild) solution $u\in X(Q_T)$ with
\[
u_{xx}\left(  0,t\right)  \in L^{2}\left(  0,T\right)  .
\]
Moreover, there exists a constant $C=C\left(  T,L\right)  >0$ such that%
\begin{equation*}
\left\Vert u\right\Vert _{C^{0}\left(  \left[  0,T\right]  ;L^{2}\left(
0,L\right)  \right)  }+\left\Vert u\right\Vert _{L^{2}\left(  0,T;H^{2}\left(
0,L\right)  \right)  }\leq C\left\Vert u_{0}\right\Vert 
\end{equation*}
and%
\begin{equation*}
\left\Vert u_{xx}\left(  0,t\right)  \right\Vert _{L^{2}\left(  0,T\right)
}\leq\left\Vert u_{0}\right\Vert \text{.} 
\end{equation*}
\end{lemma}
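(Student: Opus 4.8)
The plan is to treat \eqref{h2} by semigroup theory together with two energy identities obtained from well-chosen multipliers; the a priori bounds produced by these identities are precisely the two estimates in the statement. Throughout I write $\langle\cdot,\cdot\rangle$ for the $L^2(0,L)$ inner product and regard the spatial part of \eqref{h2} as the operator $Av=v_{xxxxx}-v_{xxx}-v_x$ with domain
\[
D(A)=\{v\in H^5(0,L):\ v(0)=v(L)=v'(0)=v'(L)=v''(L)=0\},
\]
so that the problem reads $u_t=Au$, $u(0)=u_0$. First I would establish generation of a $C_0$-semigroup: an integration by parts using all five homogeneous boundary conditions gives $\langle Av,v\rangle=-\tfrac12 v''(0)^2\le 0$, so $A$ is dissipative; computing the formal adjoint and the boundary conditions that make the bilinear boundary terms vanish identifies $D(A^*)$ and yields $\langle A^*v,v\rangle\le 0$ as well. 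By Lumer--Phillips, $A$ generates a $C_0$-semigroup of contractions on $L^2(0,L)$, which furnishes the unique mild solution $u\in C([0,T];L^2(0,L))$.

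Next I would derive the $L^2$-bound and the trace estimate simultaneously. Multiplying the equation by $u$, integrating over $(0,L)$ and using the boundary conditions, the first- and third-order terms drop out and the fifth-order term leaves exactly one surviving boundary contribution, giving the identity
\[
\frac{d}{dt}\|u(t)\|_{L^2(0,L)}^2 = -\,u_{xx}(t,0)^2 .
\]
(When the quadratic term $uu_x$ is kept, $\langle uu_x,u\rangle=\tfrac13[u^3]_0^L=0$ by the homogeneous conditions, so it does not alter this identity.) Integrating in $t$ yields both $\|u(t)\|_{L^2(0,L)}\le\|u_0\|$ for all $t$ and $\int_0^T u_{xx}(t,0)^2\,dt\le\|u_0\|^2$, i.e. the claimed sharp trace bound $\|u_{xx}(\cdot,0)\|_{L^2(0,T)}\le\|u_0\|$.

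The remaining ingredient is the Kato-type smoothing that upgrades the solution to $L^2(0,T;H^2(0,L))$. For this I would use the multiplier $xu$. The decisive computation is that $x$-weighted integration by parts of the fifth-order term produces a positive multiple of $\int_0^L u_{xx}^2\,dx$; explicitly, writing $uu_{xxxxx}=\partial_x(uu_{xxxx}-u_xu_{xxx}+\tfrac12 u_{xx}^2)$ and integrating against $x$, the homogeneous boundary conditions make every boundary term vanish and leave
\[
-\int_0^L x\,u\,u_{xxxxx}\,dx=\frac52\int_0^L u_{xx}^2\,dx .
\]
The third-order multiplier term similarly reduces to $\tfrac32\int_0^L u_x^2\,dx\ge0$ and the transport term to $-\tfrac12\int_0^L u^2\,dx$, so the full identity becomes $\tfrac12\frac{d}{dt}\int_0^L x u^2\,dx+\tfrac32\int_0^L u_x^2\,dx+\tfrac52\int_0^L u_{xx}^2\,dx=\tfrac12\int_0^L u^2\,dx$. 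Integrating in time and discarding the nonnegative $u_x$ term, the right-hand side is controlled by the already established $L^2$-bound, whence $\int_0^T\|u_{xx}(t)\|_{L^2(0,L)}^2\,dt\le C(T,L)\|u_0\|^2$. Combined with the previous step this gives the full $X(Q_T)$ estimate with constant $C(T,L)$.

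Finally, all three identities are justified first for smooth data $u_0\in D(A)$, where the solution is classical and every integration by parts is legitimate, and then extended to arbitrary $u_0\in L^2(0,L)$ by density using the contraction estimate. The main obstacle I anticipate is not the $L^2$ energy identity but the careful bookkeeping of the numerous boundary terms in the two higher-order integrations by parts: one must check that the specified conditions $v(0)=v(L)=v'(0)=v'(L)=v''(L)=0$ are exactly the ones that (i) make $A$ and $A^*$ dissipative and (ii) kill every boundary term in the $xu$-multiplier so that the favorable sign of $\tfrac52\int_0^L u_{xx}^2\,dx$ is exposed; identifying the correct adjoint boundary conditions is the most delicate point.
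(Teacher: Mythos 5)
Your proposal is correct and follows exactly the route the paper indicates (and borrows from \cite{CaKawahara}): generation of a contraction semigroup via Lumer--Phillips, the multiplier $u$ for the $L^2$ bound and the hidden trace regularity of $u_{xx}(\cdot,0)$, and the Kato-smoothing multiplier $xu$ for the $L^2(0,T;H^2(0,L))$ estimate, with all your boundary-term computations and the coefficients $\tfrac52$ and $\tfrac32$ checking out. The paper itself gives no details beyond citing semigroup theory and the multiplier method, so there is nothing further to reconcile.
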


The proof of this lemma is a direct consequence of semigroup theory and multipliers method. In the way to prove global well-posedness results for the nonlinear system, in \cite[Lemma 2.2 and 2.3]{CaKawahara}, the authors are able to prove some results for the following system 
\begin{equation}\label{h1}
\left\lbrace
\begin{array}{llr}
u_{t} + u_{x} +u_{xxx} - u_{xxxxx}+u^pu_x= 0& \mbox{in} \ Q_{T}, \\ 
u(t,0)=u(t,L)=u_{x}(t,0)= u_{x}(t,L)= u_{xx}(t,L) = 0 & \mbox{in} \ [0,T],\\
 u(0,x) = u_{0}(x) & \mbox{in} \ [0,L],
\end{array}\right. 
\end{equation} 
with $p\in(2,4]$. The global well-posedness for this system can be read as follows (we infer the read see \cite[Lemmas 2.2 and 2.3 and Remark 2.1]{CaKawahara} for details).

\begin{lemma}Let $T_{0}>0$ and $u_{0}\in L^{2}\left(  0,L\right)  $ be given.
Then there exists $T\in\left(  0,T_{0}\right]  $ such that (\ref{h1})
possesses a unique solution $u(t,x)\in Q_T$. Moreover, if
$\left\Vert u_{0}\right\Vert \ll1$, then
\begin{equation*}
\left\Vert u\right\Vert _{L^{2}\left(  0,T;H^{2}\left(  0,L\right)  \right)
}^{2}\leq c_1\left\Vert u_{0}\right\Vert ^{2} \left(1+\left\Vert u_{0}\right\Vert ^{4}\right)\text{,} 
\end{equation*}
where $c_{1}=c_{1}\left(  T,L\right)  $ is a positive constant. Moreover,
\begin{equation*}
u_{t}\in L^{4/3}\left(  0,T;H^{-3}\left(  0,L\right)  \right)  .
\end{equation*}
\end{lemma}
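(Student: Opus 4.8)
The plan is to upgrade the homogeneous linear theory of Lemma \ref{H1} to the inhomogeneous level, run a contraction argument for the nonlinearity $u^{p}u_{x}$, and then extract the two quantitative assertions from energy identities. First I would record the nonhomogeneous companion of Lemma \ref{H1}: denoting by $\{W(t)\}_{t\ge0}$ the semigroup generated by $-\px-\px^{3}+\px^{5}$ under the boundary conditions of \eqref{h1}, Duhamel's formula represents the prospective solution as
\[
u(t)=W(t)u_{0}-\int_{0}^{t}W(t-s)\bigl(u^{p}u_{x}\bigr)(s)\,ds,
\]
and the multiplier estimates behind Lemma \ref{H1}, applied to the forced equation, give a bound of the form $\|u\|_{X(Q_{T})}\le C\bigl(\|u_{0}\|_{L^{2}(0,L)}+\mathcal{N}(u^{p}u_{x})\bigr)$, where $\mathcal{N}$ measures the source term in the norm dual to the Kato smoothing of the group, so that the spatial derivative in $u^{p}u_{x}=\tfrac{1}{p+1}\px(u^{p+1})$ is absorbed as a gain rather than paid for.

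The decisive point is the nonlinear estimate, which is exactly where the restriction $p\in(2,4]$ enters. Factoring the derivative and invoking the Gagliardo--Nirenberg inequality quoted in the introduction,
\[
\int_{0}^{T}\!\!\int_{0}^{L}|u|^{p+2}\,dx\,dt\le C\,\|u\|_{C([0,T];L^{2}(0,L))}^{p}\int_{0}^{T}\|u_{x}\|_{L^{2}(0,L)}^{2}\,dt\le C\,\|u\|_{X(Q_{T})}^{p+2},
\]
one controls $\mathcal{N}(u^{p}u_{x})$ by a strictly superlinear power $C\|u\|_{X(Q_{T})}^{1+\theta}$, $\theta>0$; the same inequality, applied to $u^{p}u_{x}-v^{p}v_{x}$ after the algebraic factorisation of the difference of the nonlinearities, produces the Lipschitz bound. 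Consequently the map $u\mapsto W(\cdot)u_{0}-\int_{0}^{\cdot}W(\cdot-s)(u^{p}u_{x})(s)\,ds$ leaves invariant a suitable ball of $X(Q_{T})$ and is a contraction on it, provided either $T\in(0,T_{0}]$ is small enough (for fixed $u_{0}$) or $\|u_{0}\|_{L^{2}(0,L)}\ll1$ (for fixed $T$). The Banach fixed point theorem then yields the unique solution $u\in X(Q_{T})$.

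For the quantitative smoothing bound I would use the multiplier method on \eqref{h1}. Multiplying by $u$ and integrating over $(0,L)$, the homogeneous boundary conditions annihilate every boundary contribution except $\tfrac12|u_{xx}(t,0)|^{2}\ge0$, while $\int_{0}^{L}u^{p}u_{x}\,u\,dx=\tfrac{1}{p+2}\int_{0}^{L}\px(u^{p+2})\,dx=0$; hence $t\mapsto\|u(t)\|_{L^{2}(0,L)}$ is nonincreasing. Multiplying instead by the Kato multiplier $xu$ surfaces the smoothing gain $\int_{0}^{T}\int_{0}^{L}u_{xx}^{2}\,dx\,dt$, balanced on the right by $\|u_{0}\|_{L^{2}(0,L)}^{2}$, lower order terms, and the nonlinear contribution $-\tfrac{1}{p+2}\int_{0}^{T}\int_{0}^{L}u^{p+2}\,dx\,dt$, again estimated by the displayed Gagliardo--Nirenberg inequality. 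Combining this with $\|u\|_{X(Q_{T})}^{2}\lesssim\|u_{0}\|_{L^{2}(0,L)}^{2}+\int_{0}^{T}\int_{0}^{L}u_{xx}^{2}$, absorbing the higher-order nonlinear term under the smallness hypothesis $\|u_{0}\|_{L^{2}(0,L)}\ll1$, and carefully tracking the powers of $\|u_{0}\|_{L^{2}(0,L)}$ produced by the interpolation leads to
\[
\|u\|_{L^{2}(0,T;H^{2}(0,L))}^{2}\le c_{1}\|u_{0}\|_{L^{2}(0,L)}^{2}\bigl(1+\|u_{0}\|_{L^{2}(0,L)}^{4}\bigr).
\]

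Finally, the regularity of $u_{t}$ is read off from the equation, $u_{t}=-u_{x}-u_{xxx}+u_{xxxxx}-u^{p}u_{x}$. The three linear terms are bounded in $H^{-3}(0,L)$ by $\|u\|_{H^{2}(0,L)}$, hence belong to $L^{2}(0,T;H^{-3}(0,L))\subset L^{4/3}(0,T;H^{-3}(0,L))$, the term $u_{xxxxx}$ being the one that forces the exponent $-3$. For the nonlinear term I write $u^{p}u_{x}=\tfrac{1}{p+1}\px(u^{p+1})$ and use $L^{1}(0,L)\hookrightarrow H^{-3}(0,L)$ to get $\|u^{p}u_{x}\|_{H^{-3}(0,L)}\lesssim\|u^{p+1}\|_{L^{1}(0,L)}$; Agmon's inequality then gives $\|u^{p+1}\|_{L^{1}(0,L)}\lesssim\|u\|_{L^{2}(0,L)}^{\theta}\|u_{x}\|_{L^{2}(0,L)}^{(p-1)/2}$, and since $\|u_{x}\|_{L^{2}(0,L)}\in L^{2}(0,T)$ and $p\le4$, the right-hand side lies in $L^{4/3}(0,T)$, with the exponent $4/3$ sharp precisely at $p=4$. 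I expect the genuine obstacle to be the nonlinear estimate that closes the contraction together with the smoothing identity in the borderline range $p\in(2,4]$, which is exactly the content of the Gagliardo--Nirenberg inequality above; by contrast, the bookkeeping of boundary terms in the multiplier computations is routine once the homogeneous conditions are used.
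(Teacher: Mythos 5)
The paper does not actually prove this lemma: it is quoted verbatim from \cite{CaKawahara} (Lemmas 2.2, 2.3 and Remark 2.1 there), so there is no internal proof to compare against. Your reconstruction is nonetheless the standard route and matches what is done in that reference: Duhamel plus a contraction in $X(Q_T)$ closed by the Gagliardo--Nirenberg bound on $\iint |u|^{p+2}$ for $p\in(2,4]$, the multiplier $u$ for the $L^2$ decay (correctly identifying $-\tfrac12|u_{xx}(t,0)|^2$ as the only surviving boundary term), the Kato multiplier $xu$ to surface $\iint u_{xx}^2$ with the nonlinear contribution $\pm\tfrac{1}{p+2}\iint u^{p+2}$ absorbed under the smallness hypothesis, and the regularity of $u_t$ read off from the equation with $u^{p}u_x=\tfrac{1}{p+1}\px(u^{p+1})$ placed in $H^{-3}$ via $L^1\hookrightarrow H^{-1}$ and interpolation in time. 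Two minor points of care: the nonlinear source should be measured in the norm actually controlled by the linear theory (e.g.\ $\|u^pu_x\|_{L^1(0,T;L^2(0,L))}$, estimated via $\|u\|_{L^\infty}^p\|u_x\|_{L^2}$ and interpolation, which closes for $p\le 4$), rather than an unspecified ``dual to the Kato smoothing'' norm; and the interpolation inequality you invoke for $\|u^{p+1}\|_{L^1(0,L)}$ is Gagliardo--Nirenberg, not Agmon. Neither affects the validity of the argument.
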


\subsection{Non-homogeneous case} For the nonhomogeneous initial-boundary value problem (IBVP)
\begin{equation}\label{Kwa}
\left\lbrace
\begin{array}{llr}
u_{t} + u_{x} +u_{xxx} - u_{xxxxx}= f(t,x) & \mbox{in} \ Q_{T}, \\ 
u(t,0)=h_{1}(t), \ u(t,L)= h_{2}(t), \ u_{x}(t,0)= h_{3}(t),& \mbox{in} \ [0,T], \\
u_{x}(t,L)=h_{4}(t), \ u_{xx}(t,L) = h(t) & \mbox{in} \ [0,T],\\
 u(0,x) = u_{0}(x) & \mbox{in} \ [0,L],
\end{array}\right. 
\end{equation} 
Zhao and Zhang \cite[Lemma 3.1]{zhang2} showed the following result:

\begin{lemma}\label{ZH} Let $T>0$ be given, there is a $C > 0$  such that for any  $f \in L^{1}(0,T; L^{2}(0,L))$, $u_{0}, h \in L^{2}(0,L)$ and $\widetilde{h} \in \mathcal{H}$, IBVP \eqref{Kwa} admits a unique solution (mild) $u:=S(u_0,h,f,\tilde{h}) \in X(Q_{T})$ satisfying
\begin{equation*}
\|u\|_{X(Q_{T})} \leq C \left( \|u_{0}\|_{L^{2}(0,L)} + \|h\|_{L^{2}(0,L)} + \|\widetilde{h}\|_{\mathcal{H}} + \|f\|_{L^{1}(0,T; L^{2}(0,L))}\right).
\end{equation*}
\end{lemma}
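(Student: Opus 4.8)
The plan is to exploit the linearity of \eqref{Kwa} and decompose the solution into three pieces that isolate, respectively, the initial datum, the interior forcing, and the boundary data. Writing $u = u^{(1)} + u^{(2)} + u^{(3)}$, I would let $u^{(1)}$ solve \eqref{Kwa} with $f=0$, $\widetilde h = 0$, $h=0$ and initial datum $u_0$; let $u^{(2)}$ solve it with $u_0 = 0$, $\widetilde h = 0$, $h=0$ and forcing $f$; and let $u^{(3)}$ carry the boundary data, with $u_0=0$ and $f=0$. If each piece lies in $X(Q_T)$ with norm controlled by the corresponding datum, then summing yields the asserted estimate, and uniqueness is handled separately.

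For $u^{(1)}$ and $u^{(2)}$ I would rely on semigroup theory. The operator $\mathcal{A}v = -v_x - v_{xxx} + v_{xxxxx}$, equipped with the domain encoding the five homogeneous boundary conditions of \eqref{h2}, generates a $C_0$-semigroup $\{W(t)\}_{t\ge 0}$ on $L^2(0,L)$; this is precisely the content of Lemma \ref{H1}, which also furnishes the Kato-type smoothing that upgrades the $C([0,T];L^2)$ bound to the full $X(Q_T)$ norm and produces the hidden trace in $L^2(0,T)$. Thus $u^{(1)} = W(\cdot)u_0$ obeys $\|u^{(1)}\|_{X(Q_T)} \lesssim \|u_0\|_{L^2(0,L)}$, while Duhamel's formula $u^{(2)}(t) = \int_0^t W(t-\tau)f(\tau)\,d\tau$, together with the same smoothing estimate and Minkowski's inequality, gives $\|u^{(2)}\|_{X(Q_T)} \lesssim \|f\|_{L^1(0,T;L^2(0,L))}$; note that the $L^1$-in-time norm is exactly what the uniform bound on $W(t)$ produces.

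The genuine difficulty is $u^{(3)}$, the pure boundary-forcing problem. Here I would follow the Bona--Sun--Zhang strategy for the KdV equation \cite{Bona1} in the fifth-order form developed by Zhao--Zhang \cite{zhang2}: apply the Laplace transform in $t$, reducing the PDE to the linear ODE $\lambda \hat u + \hat u_x + \hat u_{xxx} - \hat u_{xxxxx} = 0$ in $x$, whose solutions are spanned by $e^{\nu_j(\lambda)x}$, $j=1,\dots,5$, where the $\nu_j(\lambda)$ are the roots of the characteristic equation $\nu^5 - \nu^3 - \nu = \lambda$. Writing $\hat u = \sum_j c_j(\lambda)e^{\nu_j(\lambda)x}$ and imposing the five transformed boundary conditions determines the coefficients $c_j$ by inverting a $5\times 5$ matrix; inverting the Laplace transform then represents $u^{(3)}$ through explicit boundary integral operators acting on $h_1,\dots,h_4,h$. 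The fractional regularity exponents defining $\mathcal{H}$ emerge from the quintic scaling $|\nu_j(\lambda)| \sim |\lambda|^{1/5}$ as $|\lambda|\to\infty$, which matches a $j$-th order boundary trace with the time-regularity index $(2-j)/5$, that is, $2/5$ for $h_1,h_2$, $1/5$ for $h_3,h_4$, and the $L^2(0,T)$ level (index $0$) for $h=u_{xx}(\cdot,L)$.

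The main obstacle I anticipate is exactly the boundedness of these boundary integral operators from $\mathcal{H}$ (and $L^2(0,T)$ for $h$) into $X(Q_T)$, which is not formal: one must split the five roots by the sign of their real parts, control the inverse of the coefficient matrix uniformly in $\lambda$ while verifying that its determinant does not degenerate, and establish the requisite multiplier and oscillatory-integral bounds along the inversion contour. Once $u^{(3)}$ is estimated, uniqueness for the full problem follows from the standard energy identity: multiplying the homogeneous equation (all data set to zero) by $u$ and integrating over $(0,L)$, every prescribed boundary term vanishes while the single unprescribed trace $u_{xx}(t,0)$ appears with a favorable sign, yielding $\frac{d}{dt}\|u(t)\|_{L^2(0,L)}^2 = -u_{xx}(t,0)^2 \le 0$ and hence $u\equiv 0$. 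Combining the three bounds produces the stated inequality with a constant $C=C(T,L)$.
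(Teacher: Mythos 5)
The paper does not prove this lemma; it is quoted directly from Zhao--Zhang \cite[Lemma 3.1]{zhang2}, and your outline correctly reconstructs the strategy of that reference: superposition into initial-datum, Duhamel, and pure boundary-forcing pieces, semigroup plus Kato smoothing for the first two, and the Laplace-transform boundary integral operators of Bona--Sun--Zhang type for the third, with the exponents $2/5$, $1/5$, $0$ matching the quintic scaling $|\nu_j(\lambda)|\sim|\lambda|^{1/5}$. Your plan is consistent with the cited proof, with the honest caveat (which you flag yourself) that the multiplier bounds for the boundary operators are the substantive technical content and are not carried out here.
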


Considering the full system \eqref{Kw}, in this same work, Zhao and Zhang \cite[Lemma 3.2]{zhang2}, showed the following result. 

\begin{lemma}\label{Zhang 2}  There exists a constant $C>0$ such that for any $T>0$ and $u, v \in X(Q_{T})$ satisfying the following inequalities:
$$
\int_{0}^{T}\|uv_{x}\|_{L^2(0,L)}dt \leq C(T^{\frac{1}{2}} + T^{\frac{1}{4}})\|u\|_{X(Q_(T)}\|v\|{X(Q_(T)}
$$
and 
$$
\|uv_{x}\|_{W^{0,1}(0,T;L^{2}(0,L))} \leq C(T^{\frac{1}{2}} + T^{\frac{1}{4}})\|u\|_{X(Q_(T)}\|v\|_{X(Q_(T)}.
$$
\end{lemma}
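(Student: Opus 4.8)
The two displayed inequalities in fact coincide: since $W^{0,1}(0,T;L^2(0,L))=L^1(0,T;L^2(0,L))$, both assert the single bilinear bound
\[
\int_0^T\|u(t)v_x(t)\|_{L^2(0,L)}\,dt\le C\,(T^{1/2}+T^{1/4})\,\|u\|_{X(Q_T)}\|v\|_{X(Q_T)},
\]
so it suffices to prove this estimate once. Its role is to guarantee that the nonlinear term $uu_x$ lies in $L^1(0,T;L^2(0,L))$, so that Lemma \ref{ZH} can be applied with this forcing in the contraction argument for the full system \eqref{IBVP}.

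The plan is to estimate the spatial norm pointwise in $t$ and then integrate. First I would use Hölder in $x$ to write $\|u(t)v_x(t)\|_{L^2(0,L)}\le\|u(t)\|_{L^\infty(0,L)}\|v_x(t)\|_{L^2(0,L)}$. On the bounded interval $(0,L)$ the embedding $H^1(0,L)\hookrightarrow L^\infty(0,L)$ together with the Gagliardo--Nirenberg inequality yields
\[
\|u\|_{L^\infty(0,L)}\le C\big(\|u\|_{L^2(0,L)}^{1/2}\|u_{xx}\|_{L^2(0,L)}^{1/2}+\|u\|_{L^2(0,L)}\big),\quad \|v_x\|_{L^2(0,L)}\le C\big(\|v\|_{L^2(0,L)}^{1/2}\|v_{xx}\|_{L^2(0,L)}^{1/2}+\|v\|_{L^2(0,L)}\big),
\]
so the two factors are controlled by the two pieces of the $X(Q_T)$-norm. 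Integrating in $t$, I would bound every occurrence of $\|u(t)\|_{L^2(0,L)}$ and $\|v(t)\|_{L^2(0,L)}$ by $\|u\|_{X(Q_T)}$ and $\|v\|_{X(Q_T)}$ (using the $C([0,T];L^2(0,L))$ part of the norm) and dispose of the factors carrying $u_{xx}$ and $v_{xx}$ by Cauchy--Schwarz/Hölder in time. The key quantitative input is the smoothing bound
\[
\|w_x\|_{L^2(Q_T)}\le C\,(T^{1/4}+T^{1/2})\,\|w\|_{X(Q_T)},
\]
which itself follows from $\|w_x(t)\|_{L^2(0,L)}^2\le C\big(\|w(t)\|_{L^2(0,L)}\|w_{xx}(t)\|_{L^2(0,L)}+\|w(t)\|_{L^2(0,L)}^2\big)$, integrated in $t$ and combined with $\int_0^T\|w_{xx}\|_{L^2(0,L)}\,dt\le T^{1/2}\|w_{xx}\|_{L^2(Q_T)}$. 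Collecting the resulting powers of $T$ and retaining the dominant contributions then produces the factor $T^{1/2}+T^{1/4}$.

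The main obstacle is bookkeeping rather than conceptual. Because we work on the bounded interval $(0,L)$, each interpolation/embedding step generates lower-order correction terms (and, if one integrates by parts to estimate $\|w_x\|_{L^2}$, boundary contributions that must be reabsorbed), and each Hölder pairing in time contributes its own power of $T$. The delicate point is to organize these estimates so that, after invoking $\max_{t}\|\cdot\|_{L^2(0,L)}\le\|\cdot\|_{X(Q_T)}$ and the smoothing bound above, the accumulated powers of $T$ collapse to the stated $T^{1/2}+T^{1/4}$, with a constant $C=C(L)$ independent of $T$. Once the first inequality is established, the second is immediate since the two norms are identical; this supplies precisely the bilinear estimate needed to close the fixed-point argument together with Lemma \ref{ZH}.
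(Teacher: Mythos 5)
The paper does not prove this lemma: it is quoted from Zhao and Zhang \cite{zhang2} (their Lemma 3.2), so there is no internal proof to compare against, and your reconstruction has to be judged on its own. Two of your observations are correct and worth keeping: $W^{0,1}(0,T;L^{2}(0,L))$ is just $L^{1}(0,T;L^{2}(0,L))$, so at the level $s=0$ used in this paper the two displayed inequalities are one and the same estimate; and the standard route is indeed H\"older in $x$, $\|u v_x\|_{L^{2}_x}\le\|u\|_{L^{\infty}_x}\|v_x\|_{L^{2}_x}$, followed by interpolation and H\"older in time.

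The gap is in the step you defer to ``retaining the dominant contributions'': with the decomposition you propose, the powers of $T$ do not collapse to $T^{1/2}+T^{1/4}$. Expanding the product of $\|u\|_{L^{\infty}_x}\lesssim\|u\|_{L^{2}_x}^{1/2}\|u_{xx}\|_{L^{2}_x}^{1/2}+\|u\|_{L^{2}_x}$ with $\|v_x\|_{L^{2}_x}\lesssim\|v\|_{L^{2}_x}^{1/2}\|v_{xx}\|_{L^{2}_x}^{1/2}+\|v\|_{L^{2}_x}$, bounding the $L^{2}_x$ factors by the $C([0,T];L^{2})$ part of the norm, and pairing the half-powers of $\|u_{xx}\|,\|v_{xx}\|$ against constants by H\"older in time gives the four contributions $T^{1/2}$, $T^{3/4}$, $T^{3/4}$ and $T$, i.e.\ a bound $C(T^{1/2}+T^{3/4}+T)\|u\|_{X(Q_T)}\|v\|_{X(Q_T)}$. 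This is comparable to the stated bound only for $T\le 1$; no rearrangement of these four terms produces a $T^{1/4}$. The same over-production of powers occurs if you apply your (correct) auxiliary estimate $\|w_x\|_{L^{2}(Q_T)}\le C(T^{1/4}+T^{1/2})\|w\|_{X(Q_T)}$ to \emph{both} factors, since the product of two such prefactors behaves like $T$ for large $T$. The way the $T^{1/4}$ actually arises is asymmetric: keep the factor $\|v_x(t)\|_{L^{2}_x}\le\|v(t)\|_{H^{2}_x}$ whole and place it in $L^{2}_t$, where it is controlled by the $L^{2}(0,T;H^{2})$ component of $\|v\|_{X(Q_T)}$ with \emph{no} power of $T$; then the only H\"older-in-time loss comes from $\int_{0}^{T}\|u_{xx}\|_{L^{2}_x}^{1/2}\,g\,dt\le T^{1/4}\|u_{xx}\|_{L^{2}(Q_T)}^{1/2}\|g\|_{L^{2}(0,T)}$ (exponents $4,2,4$), giving the $T^{1/4}$ term, while the companion term $\|u\|_{L^{2}_x}\|v_x\|_{L^{2}_x}$ yields the $T^{1/2}$. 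Since the lemma is only invoked in the contraction argument, where $T$ is small or smallness is supplied by the data, your version is serviceable there, but as written it does not establish the inequality claimed for all $T>0$.
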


\section{Auxiliary results} \label{Sec2}
In this section we are interested to prove some auxiliary lemmas for the solutions of the  system
\begin{equation}\label{Kw}
\left\lbrace
\begin{array}{llr}
u_{t} + u_{x} +u_{xxx} - u_{xxxxx}= f(t,x) & \mbox{in} \ Q_{T}, \\ 
u(t,0)=h_{1}(t), \ u(t,L)= h_{2}(t), \ u_{x}(t,0)= h_{3}(t),& \mbox{in} \ [0,T], \\
u_{x}(t,L)=h_{4}(t), \ u_{xx}(t,L) = h(t) & \mbox{in} \ [0,T],\\
 u(0,x) = u_{0}(x) & \mbox{in} \ [0,L].
\end{array}\right. 
\end{equation} 
To do this, consider $\omega\in\mathcal{J}$ defined by \eqref{jota} and define  $q:[0,T] \longrightarrow \mathbb{R}$ as follows
$$q(t)= \int_{0}^{L}u(t,x)\omega(x)dx,$$
where $u= S(u_{0}, h, f, \widetilde{h})$  is solution of \eqref{Kw} guaranteed by Lemma \ref{ZH}. The next two auxiliary lemmas are the key point to show the main results of this work. The first one, gives that $q\in W^{1,p}(0,L)$ and can be read as follows.

\begin{lemma}\label{1.1} Let $p \in [1, \infty]$ and the assumptions of Lemma \ref{ZH} be satisfied. Suppose that $h_{i}$, for $i=1,2,3,4$, and $h$ belonging in $L^{p}(0,T)$, $f= f_{1} + f_{2x}$, where $f_{1} \in L^{p}(0,T; L^{2}(0,L))$ and $f_{2} \in L^{p}(0,T; L^{1}(0,L))$. If $u= S(u_{0}, h, f_{1} + f_{2x}, \widetilde{h})$ is a mild solution of \eqref{Kw} and $\omega \in \mathcal{J}$, then the function $q\in W^{1,p}(0,T)$ and the relation
\begin{equation}\label{q'}
\begin{split}
 q'(t) =  &\ \omega''(L)h(t) - \omega'''(L)h_{4}(t) + \omega'''(0)h_{3}(t) + \omega''''(L)h_2(t) - \omega''''(L)h_{1}(t)  \\
 &+ \int_{0}^{L}f_{1}(t,x)\omega(x)dx - \int_{0}^{L}f_{2}(t,x)\omega'(x)dx+ \int_{0}^{L}u(t,x)[\omega'(x) +\omega'''(x) - \omega'''''(x)]dx 
\end{split}
\end{equation} 
holds for almost all $t\in[0,T]$. In addition, the function $q'\in L^{p}(0,T)$ can be estimate in the following way
\begin{equation}\label{norma de q'}
\begin{split}
\|q'\|_{L^{p}(0,T)} \leq &\ C\left( \|u_{0}\|_{L^{2}(0,L)} + \|h\|_{(L^{p}\cap L^{2})(0,T)} + \|h_{3}\|_{(L^{p}\cap H^{\frac{1}{5}})(0,T)}\right. \\
&+ \|h_{4}\|_{(L^{p}\cap H^{\frac{1}{5}})(0,T)} + \|h_{1}\|_{(L^{p}\cap H^{\frac{2}{5}})(0,T)}+\|h_{2}\|_{(L^{p}\cap H^{\frac{2}{5}})(0,T)}\\
&\left.+ \|f_{1}\|_{L^{p}(0,T;L^{2}(0,L))} +  \|f_{2}\|_{L^{p}(0,T;L^{1}(0,L))} + \|f_{2x}\|_{L^{1}(0,T;L^{2}(0,L))} \right),
\end{split}
\end{equation} 
with $C > 0$  a constant that is nondecreasing with increasing $T$.
\end{lemma}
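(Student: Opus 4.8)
The plan is to establish \eqref{q'} first for smooth, compatible data and then extend it to a general mild solution by a density argument. Concretely, I would first take $u_0$, $f_1$, $f_2$, $h$ and $\widetilde{h}$ smooth (and satisfying the natural compatibility conditions) so that the corresponding solution $u$ of \eqref{Kw} is classical and every trace appearing below is a genuine pointwise value. For such $u$ the map $q(t)=\int_0^L u(t,x)\omega(x)\,dx$ is of class $C^1$, one may differentiate under the integral sign, and $q'(t)=\int_0^L u_t(t,x)\omega(x)\,dx$. Since \eqref{Kw} is linear and since both the identity \eqref{q'} and the bound \eqref{norma de q'} are linear and continuous in the data with respect to the norms on their right-hand sides, the result will then pass to the limit for arbitrary data via Lemma \ref{ZH}, by approximating $(u_0,f_1,f_2,h,\widetilde{h})$ in the relevant spaces.

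For the core computation I would substitute $u_t=-u_x-u_{xxx}+u_{xxxxx}+f_1+f_{2x}$ into $q'(t)=\int_0^L u_t\omega\,dx$ and integrate by parts, moving all spatial derivatives off $u$ and onto $\omega$. The fifth-order term $\int_0^L u_{xxxxx}\omega\,dx$ is the delicate one and requires five integrations by parts; the third- and first-order terms two and one, and $\int_0^L f_{2x}\omega\,dx$ one. The crucial point is to track the endpoint contributions at $x=0$ and $x=L$, and here $\omega\in\mathcal{J}$ enters decisively: since $\omega\in H^2_0(0,L)$ we have $\omega(0)=\omega(L)=\omega'(0)=\omega'(L)=0$, so every boundary term carrying a factor $\omega$, $\omega'$ or $f_2$ at the endpoints vanishes; and the extra condition $\omega''(0)=0$ is exactly what annihilates the boundary term proportional to the trace $u_{xx}(t,0)$ — the only trace of $u$ that is \emph{not} prescribed in \eqref{Kw} and hence cannot be controlled. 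The surviving boundary values are precisely $u(t,0)=h_1$, $u(t,L)=h_2$, $u_x(t,0)=h_3$, $u_x(t,L)=h_4$ and $u_{xx}(t,L)=h$, which recombine into the boundary terms of \eqref{q'}, while the interior integrals collapse to $\int_0^L u(\omega'+\omega'''-\omega''''')\,dx$ and the source terms to $\int_0^L f_1\omega\,dx-\int_0^L f_2\omega'\,dx$; note that $\omega\in H^5(0,L)$ guarantees $\omega'''''\in L^2(0,L)$, so the last interior integral is meaningful.

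It remains to prove \eqref{norma de q'}. Each boundary term of \eqref{q'} is estimated by a constant depending on $\|\omega\|_{H^5(0,L)}$ (the point values $\omega''(L)$, $\omega'''(0)$, etc.\ being controlled through $H^5\hookrightarrow C^4$) times the $L^p(0,T)$ norm of the corresponding $h$ or $h_i$; the sources are handled by Cauchy--Schwarz and Hölder, giving $\|\int_0^L f_1\omega\,dx\|_{L^p}\le\|\omega\|_{L^2}\|f_1\|_{L^p(0,T;L^2)}$ and $\|\int_0^L f_2\omega'\,dx\|_{L^p}\le\|\omega'\|_{L^\infty}\|f_2\|_{L^p(0,T;L^1)}$. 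For the interior term I would use $|\int_0^L u(\omega'+\omega'''-\omega''''')\,dx|\le\|u(t,\cdot)\|_{L^2}\,\|\omega'+\omega'''-\omega'''''\|_{L^2}$, take the $L^p(0,T)$ norm (producing a factor $T^{1/p}$, whence the constant is nondecreasing in $T$), and then invoke Lemma \ref{ZH} to dominate $\|u\|_{C([0,T];L^2)}\le\|u\|_{X(Q_T)}$ by the data; the term $\|f_{2x}\|_{L^1(0,T;L^2)}$ appears precisely because Lemma \ref{ZH} requires $f=f_1+f_{2x}\in L^1(0,T;L^2)$, while $\|f_1\|_{L^1(0,T;L^2)}\le T^{1-1/p}\|f_1\|_{L^p(0,T;L^2)}$ by Hölder. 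Finally, since \eqref{q'} holds a.e.\ with right-hand side in $L^p(0,T)$ and $q$ is continuous (as $u\in C([0,T];L^2)$) with $q(t)-q(0)=\int_0^t q'(s)\,ds$, we conclude $q\in W^{1,p}(0,T)$.

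The main obstacle I expect is not the algebra of the integrations by parts but their rigorous justification: the mild solution of Lemma \ref{ZH} lies only in $X(Q_T)=C([0,T];L^2)\cap L^2(0,T;H^2)$, so it possesses neither five classical spatial derivatives nor pointwise-in-time traces, and \eqref{q'} cannot be obtained by differentiating $q$ naively. The density argument above — proving the identity for regular data and passing to the limit by continuity and linearity — is what makes the computation legitimate, and care is needed to verify that each term of \eqref{q'} and \eqref{norma de q'} depends continuously on the approximation, in particular that the prescribed traces converge in the fractional spaces constituting $\mathcal{H}$.
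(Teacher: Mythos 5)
Your proposal is correct and follows essentially the same route as the paper: test the equation against $\omega$, integrate by parts using $\omega\in H^{2}_{0}(0,L)$ and $\omega''(0)=0$ to eliminate the uncontrolled trace $u_{xx}(t,0)$, and then estimate each term of \eqref{q'} via H\"older's inequality together with Lemma \ref{ZH}. The only inessential difference is the regularization device: the paper obtains $q'$ as a distributional derivative by multiplying the equation by $\psi(t)\omega(x)$ with $\psi\in C_{0}^{\infty}(0,T)$, whereas you smooth the data and pass to the limit by linearity and continuity --- both justify the same computation.
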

\begin{proof}
Considering $\psi \in C_{0}^{\infty}(0,T)$, multiplying \eqref{Kw} by $\psi\omega$ and integrating by parts in $Q_{T}$ we have that
\begin{equation*}
\begin{split}
\int_{0}^{T}\psi'(t)q(t)dt=&\int_{0}^{T}\psi(t)\left( \omega''(L)h(t) - \omega'''(L)h_{4}(t) +\omega'''(0)h_{3}(t) \right.\\
&+\int_{0}^{T}\omega''''(L)h_2(t) - \omega''''(0)h_{1}(t)\\
&+\left.\int_{0}^{L}f_{1}(t,x)\omega(x)dx - \int_{0}^{L}f_{2}(t,x)\omega'(x)dx\right)dt\\
&+ \int_{0}^{L}u(t,x)(\omega'(x) + \omega'''(x) - \omega'''''(x))dx\\
=&-\int_0^T\psi(t)r(t)dt
\end{split}
\end{equation*} 
with $ r: [0,T] \longmapsto \mathbb{R}$ defined by
\begin{equation*}
\begin{split}
r(t) =  &\ \omega''(L)h(t) - \omega'''(L)h_{4}(t) + \omega'''(0)h_{3}(t) + \omega''''(L)h_2(t) - \omega''''(L)h_{1}(t)  \\
 &+ \int_{0}^{L}f_{1}(t,x)\omega(x)dx - \int_{0}^{L}f_{2}(t,x)\omega'(x)dx+ \int_{0}^{L}u(t,x)[\omega'(x) +\omega'''(x) - \omega'''''(x)]dx ,
\end{split}
\end{equation*}
which gives us $q'(t)=r(t)$.

It remains for us to prove that $q' \in L^{p}(0,T)$, for $p\in[1,\infty]$. To do it, we need to bound each term of \eqref{q'}. We will split the proof in two cases, namely, $p\in[1,\infty)$ and $p=+\infty$.

\vspace{0.2cm}
\noindent\textbf{Case 1.} $1 \leq p < \infty$. 
\vspace{0.2cm}

First, note that 
\begin{equation*}
\begin{split}
\left| \int_{0}^{L}u(t,x)(\omega'(x)+ \omega'''(x) - \omega'''''(x))dx\right|&\leq\|\omega\|_{H^{5}(0,L)}\|u(t,\cdot)\|_{L^{2}(0,L)}\\&\leq T^{\frac{1}{p}}\|\omega\|_{H^{5}(0,L)}\|u\|_{C([0,T];L^2(0,L))}\\&\leq C(T,\|\omega\|_{H^{5}(0,L)})\|u\|_{X(Q_T)}.
\end{split}
\end{equation*}
To bound the last term of \eqref{q'}, note that 
\begin{equation*}
\begin{split}
\left| \int_{0}^{L}f_{2}(t,x)\omega'(x)dx\right|&\leq C(L)\|\omega'\|_{H^{1}_{0}(0,L)}\|f_{2}(t,\cdot)\|_{L^{1}(0,L)}\\
&\leq  C(L)\|\omega\|_{H^{5}(0,L)}\|f_{2}(t,\cdot)\|_{L^{1}(0,L)},
\end{split}
\end{equation*}
since $H^{1}(0,L) \hookrightarrow L^{\infty}(0,L) \cap C[0,L]$. So, last inequality yields that
$$\left\| \int_{0}^{L}f_{2}(t,x)\omega'(x)dx\right\|_{L^{p}(0,T)} \leq C(L,\|\omega\|_{H^{5}(0,L)})\|f_{2}\|_{L^{p}(0,T; L^{1}(0,L))}.
$$
Also, we have
$$\left\| \int_{0}^{L}f_{1}(t,x)\omega(x)dx\right\|_{L^{p}(0,T)} \leq \|\omega\|_{L^{2}(0,L)}\|f_{1}\|_{L^{p}(0,T;L^{2}(0,L))}.$$

To finish this case note that $h_{i}$, for $i=1,2,3,4$ and $h$ belong to $L^{p}(0,T)$. Thus, we have that $q' \in L^{p}(0,T)$, which ensures that $q \in W^{1,p}(0,T)$. Moreover, follows that
\begin{equation*}
\begin{split}
\|q'\|_{L^{p}(0,T)} \leq &\widetilde{C}(T, L, \|\omega\|_{H^{5}(0,L)})\left( \|h\|_{L^{p}(0,T)}  +\|h_{1}\|_{L^{p}(0,T)}+\|h_{2}\|_{L^{p}(0,T)}+\|h_{3}\|_{L^{p}(0,T)} + \|h_{4}\|_{L^{p}(0,T)} \right.\\
&\left.+ \|u\|_{X(Q_{T})}+ \|f_{1}\|_{L^{p}(0,T;L^{2}(0,L))} + \|f_{2}\|_{L^{p}(0,T;L^{1}(0,L))} \right).
\end{split}
\end{equation*}
Thus, estimate \eqref{norma de q'} holds true, showing Case 1.

\vspace{0.2cm}
\noindent\textbf{Case 2.} $p= \infty$.
\vspace{0.2cm}

This case follows noting that 
\begin{equation*}
\begin{split}
\|q'\|_{C([0,T])} \leq &\ C\left( \|u\|_{X(Q_{T})} + \|f_{2}\|_{C(0,T;L^{1}(0,L))} + \|f_{1}\|_{C([0,T];L^{2}(0,L))} \right. \\ & + \left.\|h\|_{C([0,T])} + \|h_{1}\|_{C([0,T])}+\|h_{2}\|_{C([0,T])}+\|h_{3}\|_{C([0,T])} + \|h_{4}\|_{C([0,T])}\right).
\end{split}
\end{equation*}
Thus, Case 2 is achieved and the proof of the lemma is complete.
\end{proof}

The next proposition gives us a relation between $u,\ h$ and $f_1$ and will be the key point to prove the control problems, presented in the next sections.

\begin{lemma}\label{lem1.1} Suppose that  $h \in L^{2}(0,L)$, $f_{1} \in L^{1}(0,T; L^{2}(0,L))$ and $u= S(0,h, f_{1},0)$ mild solution of \eqref{Kw}, then
\begin{equation}\label{1.2}
\int_{0}^{L}|u(t,x)|^{2}dx \leq \int_{0}^{t}|h(t)|^{2}d\tau + 2\int_{0}^{t}\!\!\int_{0}^{L}f_{1}(\tau,x)u(\tau,x)dxdt
\end{equation} 
for all $t \in [0,T]$.
\end{lemma}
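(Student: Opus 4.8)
The plan is to establish \eqref{1.2} by the classical energy (multiplier) method, using $u$ itself as the multiplier, and then to remove the smoothness requirement by a density argument resting on the well-posedness of Lemma \ref{ZH}.

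First I would assume that $u$ is a sufficiently regular solution, arising from smooth, compatible data $u_{0}=0$, $h$, $f_{1}$, so that the integrations by parts below are licit. Multiplying the equation $u_{t}+u_{x}+u_{xxx}-u_{xxxxx}=f_{1}$ by $u$ and integrating over $(0,L)$ gives
\[
\frac{1}{2}\frac{d}{dt}\int_{0}^{L}u^{2}\,dx + \int_{0}^{L}u_{x}u\,dx + \int_{0}^{L}u_{xxx}u\,dx - \int_{0}^{L}u_{xxxxx}u\,dx = \int_{0}^{L}f_{1}u\,dx.
\]
I would then integrate by parts in each spatial term, invoking the homogeneous boundary conditions $u(t,0)=u(t,L)=u_{x}(t,0)=u_{x}(t,L)=0$ (recall that here $\widetilde{h}=0$) to discard the boundary contributions. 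The transport term $\int_{0}^{L}u_{x}u\,dx=\tfrac12\,[u^{2}]_{0}^{L}$ and the third-order term $\int_{0}^{L}u_{xxx}u\,dx$ then vanish entirely. The only surviving boundary datum comes from the fifth-order term: integrating by parts twice and using $u_{xx}(t,L)=h(t)$ one finds
\[
-\int_{0}^{L}u_{xxxxx}u\,dx = -\frac{1}{2}\bigl(u_{xx}(t,L)^{2}-u_{xx}(t,0)^{2}\bigr) = -\frac{1}{2}h(t)^{2}+\frac{1}{2}u_{xx}(t,0)^{2}.
\]

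The key observation is that the extra trace term $\tfrac12 u_{xx}(t,0)^{2}$ carries a favorable sign: keeping it on the correct side and simply discarding it yields the differential inequality
\[
\frac{1}{2}\frac{d}{dt}\int_{0}^{L}u^{2}\,dx \leq \frac{1}{2}h(t)^{2} + \int_{0}^{L}f_{1}(t,x)u(t,x)\,dx.
\]
Integrating this over $(0,t)$, using $u(0,\cdot)=0$, and multiplying by $2$ produces exactly \eqref{1.2}.

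Finally, to reach the stated generality $h\in L^{2}(0,L)$ and $f_{1}\in L^{1}(0,T;L^{2}(0,L))$, I would approximate the data by smooth sequences $h^{n}$, $f_{1}^{n}$, apply the estimate above to the classical solutions $u^{n}=S(0,h^{n},f_{1}^{n},0)$, and pass to the limit. Here Lemma \ref{ZH} is essential: it guarantees $u^{n}\to u$ in $X(Q_{T})$, whence $u^{n}(t,\cdot)\to u(t,\cdot)$ in $L^{2}(0,L)$ and $\int_{0}^{t}\!\int_{0}^{L}f_{1}^{n}u^{n}\to\int_{0}^{t}\!\int_{0}^{L}f_{1}u$, while $\int_{0}^{t}|h^{n}|^{2}\to\int_{0}^{t}|h|^{2}$; since \eqref{1.2} is an inequality, no control on the limiting boundary trace $u_{xx}^{n}(t,0)$ is required. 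I expect this justification of the formal computation for a mild solution to be the only genuine obstacle: the multiplier identity itself is routine once the boundary terms are accounted for, the crucial point being precisely that the single non-vanishing boundary term besides the control $h$ has the right sign to be dropped.
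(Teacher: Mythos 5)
Your proposal is correct and follows essentially the same route as the paper: multiply the equation by $u$ (the paper uses $2u$), integrate by parts using the homogeneous conditions $\widetilde h=0$ so that only the traces $u_{xx}(t,L)=h(t)$ and $u_{xx}(t,0)$ survive, discard the sign-favorable term $-\tfrac12|u_{xx}(t,0)|^{2}$, integrate in time from $u(0,\cdot)=0$, and conclude by density of smooth data together with the continuity of $S$ from Lemma \ref{ZH}. No substantive difference from the paper's argument.
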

\begin{proof}
Pick any function $h \in C_{0}^{\infty}(0,T)$ and consider $f_{1} \in C_{0}^{\infty}(Q_{T})$. Therefore, there exists a smooth solution $u= S(0,h, f_{1},0)$ of \eqref{Kw}. Thus, multiplying   \eqref{Kw} by $2u$, integrating in $[0,L]$ and using the boundary conditions (remembering that $h_1=h_2=h_3=h_4=0$), we get that
\begin{equation*}
\begin{split}
\frac{d}{dt}\int_{0}^{L}|u(t,x)|^{2}dx = &\int_{0}^{L}f_{1}(t,x)u(t,x)dx + |u_{xx}(t,L)|^{2} - |u_{xx}(t,0)|^{2}\\
\leq &\ 2\int_{0}^{L}f_{1}(t,x)u(t,x)dx + |u_{xx}(t,L)|^{2}.
\end{split}
\end{equation*}
So, using the fact that $u_{xx}(t,L) = h(t)$, integrating in $[0,t]$ and taking account that $u(0,\cdot) = 0$ yields
$$
\int_{0}^{L}|u(t,x)|^{2}dx \leq 2\int_{0}^{t}\int_{0}^{L}f_{1}(\tau,x)u(\tau,x)dxd\tau + \int_{0}^{t}|h(\tau)|^{2}d\tau
$$
which implies inequality \eqref{1.2}. By density argument and the continuity of the operator $S$, the result is proved.
\end{proof}

\begin{remarks} We are now giving some remarks.
\begin{itemize}
\item[i.] We are implicitly assuming that $ f_ {2x} \in L ^ {1} (0, T; L ^ {2} (0, L)) $ in the Lemma \ref{1.1}, but it is not a problem, since the function that  we will take for $ f_ {2} $, in our purposes, satisfies that condition.
\vspace{0.1cm}
\item[ii.] When $p= \infty$, in Lemma \ref{lem1.1}, the spaces $L^{p}(0,T)$, $L^{p}(0,T;L^{2}(0,L))$ and $L^{p}(0,T;L^{1}(0,L))$ are replaced by the spaces  $C([0,T])$, $C([0,T];L^{2}(0,L))$ and $ C([0,T];L^{1}(0,L))$, respectively. So, we can obtain $q \in C^1([0,T]).$
\end{itemize} 
\end{remarks}

\section{Boundary control}\label{Sec3}
In this section we are interested in providing answers of overdetermination controllability results for the system \eqref{int1} when the control is acting at the boundary. Precisely, we want to find a control function $h(t)$ acting in the boundary such that the solution of the system in consideration satisfies an overdetermination condition which will take an integral form.

\subsection{Linear result} In this spirit presented above, the first lemma helps to prove a controllability result for the linear case.
\begin{lemma}\label{lema3}
Suppose that $f= \widetilde{h} = u_{0}= 0$ and $\omega \in \mathcal{J}$, with $\omega''(L) \neq 0$ and $\varphi \in \widetilde{W}^{1,p}(0,T)$, for some $p \in [2,\infty].$ Then there exists a unique function $h= \Gamma\varphi \in L^{p}(0,T)$ whose corresponding generalized solution (mild) $u= S(0,h,0,0)$ of \eqref{Kw} satisfies the condition \eqref{IO}. Moreover, the linear operator $$\Gamma: \widetilde{W}^{1,p}(0,T) \longmapsto L^{p}(0,T)$$ is bounded and its norm is nondecreasing with increasing $T$.
\end{lemma}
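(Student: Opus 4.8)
The plan is to reduce the overdetermination condition \eqref{IO} to a linear Volterra integral equation of the second kind for the unknown control $h$, and then to invert that equation by a fixed-point argument that exploits the nonvanishing of $\omega''(L)$ together with the causal (energy) estimate of Lemma \ref{lem1.1}.

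First I would apply Lemma \ref{1.1} with $f=\widetilde{h}=u_{0}=0$. For $u=S(0,h,0,0)$ the function $q(t)=\int_{0}^{L}u(t,x)\omega(x)\,dx$ lies in $W^{1,p}(0,T)$, and since every boundary trace except $h$ as well as the source terms vanish, its derivative collapses to
\[ q'(t)=\omega''(L)\,h(t)+\int_{0}^{L}u(t,x)\big(\omega'(x)+\omega'''(x)-\omega'''''(x)\big)\,dx. \]
Because $u_{0}=0$ gives $q(0)=0$ and $\varphi\in\widetilde{W}^{1,p}(0,T)$ gives $\varphi(0)=0$, the target condition $q=\varphi$ (that is, \eqref{IO}) is equivalent to the single identity $q'=\varphi'$. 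Setting $Bh(t):=\int_{0}^{L}u(t,x)(\omega'+\omega'''-\omega''''')\,dx$, which is linear in $h$ by linearity of $S$, the problem becomes the operator equation $(\omega''(L)\,I+B)h=\varphi'$ in $L^{p}(0,T)$, equivalently the fixed-point equation $h=Ph$ with $Ph:=\tfrac{1}{\omega''(L)}(\varphi'-Bh)$; here the hypothesis $\omega''(L)\neq0$ is exactly what allows me to isolate $h$.

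Next I would establish the contraction estimate. The crucial input is Lemma \ref{lem1.1}, which (with $f_{1}=0$) yields the causal bound $\|u(t,\cdot)\|_{L^{2}(0,L)}^{2}\le\int_{0}^{t}|h(\tau)|^{2}\,d\tau$; combined with $|Bh(t)|\le\|\omega\|_{H^{5}(0,L)}\|u(t,\cdot)\|_{L^{2}(0,L)}$ this gives
\[ |Bh(t)|\le\|\omega\|_{H^{5}(0,L)}\Big(\int_{0}^{t}|h(\tau)|^{2}\,d\tau\Big)^{1/2}. \]
For $p\in[2,\infty)$, Hölder's inequality then produces $\|Bh\|_{L^{p}(0,T)}\le C\,T^{1/2}\,\|\omega\|_{H^{5}(0,L)}\|h\|_{L^{p}(0,T)}$, with the analogous $L^{\infty}$ bound for $p=\infty$ handled as in Remark ii, so $P$ is a contraction for small $T$. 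To remove the smallness restriction for arbitrary $T$ I would exploit the Volterra (causal) nature of $B$: either pass to an exponentially weighted norm on $[0,T]$ that renders $P$ a contraction once the weight parameter is large, or iterate and note that the Lipschitz constant of $P^{n}$ decays like $(CT)^{n/2}/\sqrt{n!}$, so a generalized Banach fixed-point theorem applies. In either case $P$ has a unique fixed point $h=\Gamma\varphi$, and $\Gamma$ is linear because $P$ depends affinely and linearly on $\varphi'$.

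Finally, boundedness of $\Gamma$ follows by inserting the fixed-point identity back into the estimate: from $\|h\|_{L^{p}(0,T)}\le\tfrac{1}{|\omega''(L)|}\big(\|\varphi'\|_{L^{p}(0,T)}+\|Bh\|_{L^{p}(0,T)}\big)$ and the causal bound above, a Gronwall-type argument absorbs $\|Bh\|$ and yields $\|h\|_{L^{p}(0,T)}\le C(T)\|\varphi'\|_{L^{p}(0,T)}=C(T)\|\varphi\|_{\widetilde{W}^{1,p}(0,T)}$ with a constant that the Gronwall estimate makes manifestly nondecreasing in $T$; together with causality (the solution on $[0,T_{2}]$ restricts to the solution on $[0,T_{1}]$ for $T_{1}<T_{2}$, by uniqueness) this gives monotonicity of $\|\Gamma\|$. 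I expect the main obstacle to be justifying invertibility of $\omega''(L)\,I+B$ for \emph{all} $T>0$ rather than only for small $T$; this is precisely where the causal energy estimate of Lemma \ref{lem1.1} is indispensable, since a naive contraction or Neumann-series argument only closes locally in time.
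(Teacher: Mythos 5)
Your proposal is correct and follows essentially the same route as the paper: the reduction of \eqref{IO} to the fixed-point equation $h=\frac{1}{\omega''(L)}\bigl(\varphi'-Bh\bigr)$ via the identity \eqref{q'} (your $P$ is the paper's operator $A$ after the normalization $\omega''(L)=1$), the use of the causal energy estimate of Lemma \ref{lem1.1} to control $\|u_1(t,\cdot)-u_2(t,\cdot)\|_{L^{2}(0,L)}$ by $\|\mu_1-\mu_2\|_{L^{2}(0,t)}$, and the contraction in an exponentially weighted norm for all $T>0$ are exactly the paper's argument, with only cosmetic differences (the paper gets boundedness of $\Gamma=\Lambda^{-1}$ from Banach's bounded-inverse theorem and monotonicity in $T$ by extending $\varphi$ constantly past $T$, rather than your direct Gronwall/causality remarks).
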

\begin{proof}  Without loss of generality we consider here $\omega''(L)= 1$, in order to simplify the computations.  First, define the application $\Lambda:  L^{p}(0,T)\longrightarrow \widetilde{W}^{1,p}(0,T)$ as  $$ (\Lambda h)(\cdot)= \int_{0}^{L}u(\cdot,x)\omega(x)dx,$$
with $u = S(0,h,0,0)$, assured by Lemma \ref{H1}. Observe that $ (\Lambda h)(0)=0$ and $\Lambda= (Q \circ S),$ with the functions $Q: X(Q_{T}) \longrightarrow W^{1,p}(0,T)$ defined as $$(Qv)(t)= \int_{0}^{L}v(t,x)\omega(x)dx, \ t \in [0,T]$$
and, in this case, $S$ can be viewed as  $$S: L^{2}(0,L) \longrightarrow X(Q_{T})$$ defined by $u = S(0, h, 0, 0),$ respectively. Since $S$ and $Q$ are linear, we have that  $\Lambda$ is also linear, and thanks to \eqref{norma de q'}, we get that
$$
\|\Lambda(h)\|_{\widetilde{W}^{1,p}(0,T)}=\|(Q\circ S)(h)\|_{\widetilde{W}^{1,p}(0,T)} \leq C(T) \|q'\|_{L^{p}(0,T)} \leq C(T)\|h\|_{L^{2}(0,T)} \quad \forall h \in L^{p}(0,T).
$$
Therefore, $\Lambda$ is continuous. 

Observing that the relation $\varphi= \Lambda h$, for  $h \in L^{p}(0,T)$, clearly means that the function $h$ gives the desired solution of the control problem under consideration. So, our objective is to apply the Banach's theorem to prove that the inverse of the operator $\Lambda$ is continuous. 

To do it, for a fixed function $\varphi \in \widetilde{W}^{1,p}(0,T)$, consider the mapping $A: L^{p}(0,T) \longrightarrow L^{p}(0,T)$ defined by
$$(Ah)(t)= \varphi'(t) - \int_{0}^{L}u(t,x)(\omega'(x) + \omega'''(x) - \omega''''''(x))dx, \ \forall t \in [0,T].$$ Firstly, the following claim holds true.

\begin{claim}\label{claim1}
 $\varphi= \Lambda h$ if and only if $h = Ah$.
\end{claim}

Indeed, if $\varphi = \Lambda h$, then $q(t)= (\Lambda h)(t)= \varphi(t),$ that is,  $q'(t)= \varphi'(t), \ t \in [0,T].$ Therefore, 
\begin{equation*}
 (Ah)(t)=q'(t) - \int_{0}^{L}u(t,x)(\omega'(x) + \omega'''(x) - \omega'''''(x))dx= h(t),
\end{equation*}
thanks to \eqref{q'}. Conversely, if $Ah= h$,  we have
\begin{equation*}
h(t)= \varphi'(t) - \int_{0}^{L}u(t,x)(\omega'(x) + \omega'''(x) - \omega'''''(x))dx.
\end{equation*}
Here, the identity $q'= \varphi'$ holds for the function $q(t) = \Lambda h$ due to the  identity \eqref{q'}. Since,  $\varphi(0)= q(0)=0,$ follows that $\varphi= q$ in $\widetilde{W}^{1,p}(0,T)$, and the Claim \ref{claim1} is proved.

The second claim ensures that:

\begin{claim}\label{claim2}
$A$ is a contraction. 
\end{claim}

In fact, let $ 2 \leq p < \infty$, $\mu_{1}, \mu_{2} \in L^{p}(0,T)$, $u_{1}=S(0,\mu_{1},0,0)$ and $ u_{2}= S(0,\mu_{2},0,0)$  in $X(Q_{T})$. Therefore, 
$$
A\mu_{1} - A\mu_{2}= -\int_{0}^{L}(u_{1} - u_{2})(\omega' + \omega''' - \omega''''')dx.
$$
Moreover, making $u= u_{1}- u_{2}, \ h= \mu_{1}- \mu_{2}$ we have, using \eqref{1.2}, that
\begin{equation}\label{25}
\|u_{1}(t,\cdot)- u_{2}(t,\cdot)\|_{L^{2}(0,L)} \leq \|\mu_{1} - \mu_{2}\|_{L^{2}(0,t)}, \ \forall t \in [0,T].
\end{equation}
Consider $\gamma > 0$. For $t \in [0,T]$, by H\"older inequality, follows that
\begin{equation}\label{25a}
\left|e^{-\gamma t}(A\mu_{1}-A\mu_{2})(t)\right| \leq C(\|\omega\|_{H^{5}(0,L)})e^{-\gamma t}\|u_{1}(t,\cdot) - u_{2}(t,\cdot)\|_{L^{2}(0,L)}.
\end{equation}
The relation \eqref{25} gives,
\begin{equation}\label{25b}
\begin{split}
\|e^{-\gamma t}(A\mu_1- A\mu_2)\|_{L^{p}(0,L)} \leq&\ C(\|\omega\|_{H^{5}(0,L)})\left(\int_{0}^{T}e^{-\gamma pt}\|u_{1}(t,\cdot)- u_{2}(t,\cdot)\|^{p}_{L^{2}(0,L)}dt\right)^{\frac{1}{p}}\\
\leq &\ C(\|\omega\|_{H^{5}(0,L)})\left(\int_{0}^{T}e^{-\gamma pt}\left(\int_{0}^{t}(\mu_{1}(\tau) - \mu_{2}(\tau))^{2}d\tau \right)^{\frac{p}{2}}dt\right)^{\frac{1}{p}}\\
\leq &\ C(p,\|\omega\|_{H^{5}(0,L)})\left(\int_{0}^{T}e^{-\gamma pt}\int_{0}^{t}|\mu_{1}(\tau) - \mu_{2}(\tau)|^{p}d\tau dt\right)^{\frac{1}{p}}\\
\leq &\ C(p,\|\omega\|_{H^{5}(0,L)})\left(\int_{0}^{T}e^{-\gamma pt}|\mu_{1}(\tau) - \mu_{2}(\tau)|^{p}\int_{t}^{T}e^{p\gamma(\tau-t)}d\tau dt\right)^{\frac{1}{p}}\\
\leq&\ C(p,\|\omega\|_{H^{5}(0,L)})\|e^{-\gamma t}(\mu_{1} - \mu_{2})\|_{L^{p}(0,T)}\left(\int_{0}^{T}e^{-\gamma pt}dt\right)^{\frac{1}{p}}\\
\leq&\frac{1}{(p\gamma)^{\frac{1}{p}}} C(p,T,\|\omega\|_{H^{5}(0,L)})\|e^{-\gamma t}(\mu_{1} - \mu_{2})\|_{L^{p}(0,T)}\left(1 - e^{-\gamma pT}\right)^{\frac{1}{p}}\\
=& \ C_1\|e^{-\gamma t}(\mu_{1} - \mu_{2})\|_{L^{p}(0,T)},
\end{split}
\end{equation}
with $C_{1}= C(p,T,\|\omega\|_{H^{5}(0,L)})$.  Therefore, is enough to take  $\gamma= \frac{(2C_{1})^{p}}{p},$ and so $A$ is contraction, showing the Claim \ref{claim2} for the case $p\in[2,\infty)$.

\vspace{0.1cm}

Now, let us analyse the case  $p= \infty$. Using \eqref{25a}, yields that
\begin{equation}\label{25c}
\begin{split}
\sup_{t \in [0,T]}e^{-\gamma t}\left|(A\mu_{1}- A\mu_{2})(t)\right|= & \ C( \|\omega\|_{H^{5}(0,L)})\sup_{t \in [0,T]}e^{-\gamma t}\|u_{1}(t,\cdot)- u_{2}(t,\cdot)\|_{L^{2}(0,L)}\\
\leq&\ C( \|\omega\|_{H^{5}(0,L)})\sup_{t \in [0,T]}e^{-\gamma t}\|\mu_{1} - \mu_{2}\|_{L^{2}(0,t)}\\
\leq& \ C(\|\omega\|_{H^{5}(0,L)})\sup_{t \in [0,T]}\left(\int_{0}^{t}e^{2\gamma(\tau - t)}|\mu_{1}(\tau) - \mu_{2}(\tau)|^{2}d\tau\right)^{\frac{1}{2}}\\ 
\leq&\ C(\|\omega\|_{H^{5}(0,L)})\|e^{-\gamma t}(\mu_{1} - \mu_{2})\|_{L^{\infty}(0,T)}\sup_{t \in [0,T]}\left(\frac{1}{2\gamma}[1 - e^{-2\gamma t}]\right)^{\frac{1}{2}}
\end{split}
\end{equation}
so taking  $\gamma = 2C_{1}^{2}$ yields that $A$ is a contraction, showing the claim for $p=\infty$. This analysis ensures that the mapping $A$ is a contraction, and Claim \ref{claim2} is achieved.

Therefore, for each function $\varphi \in \widetilde{W}^{1,p}(0,T)$, there exists a unique function $h \in L^{p}(0,T)$ such that $h= A(h)$, that is, $\varphi= \Lambda(h)$. It follows that operator $\Lambda$ is invertible, and so, its inverse  $\Gamma:=\Lambda^{-1}:L^{p}(0,T) \longmapsto \widetilde{W}^{1,p}(0,T)$ is continuous thanks to the Banach theorem. In particular,
$$
\|\Gamma(\varphi)\|_{L^{p}(0,T)} \leq C(T)\|\varphi'\|_{L^{p}(0,T)}.
$$
By a standard argument we continuously extend the function $\varphi$ by the constant $\varphi(T)$ in $(T,T_{1})$ with the previous inequality still valid in $(0,T_{1})$ with $C(T) \leq C(T_{1})$, therefore the operator $\Gamma$ in nondecreasing with increasing T, proving the result.
\end{proof}

With the previous result in hand, now let us show a controllability result for the linear case.

\begin{theorem}\label{caso linear 1}
Consider $p \in [2, \infty]$, $\varphi \in W^{1,p}(0,T)$, $u_{0} \in L^{2}(0,L)$, $\widetilde{h} \in \mathcal{H}$, with $ h_{i}\in L^{p}(0,T)$, for $i=1,2,3,4$ and $f= f_{1} + f_{2x},$ where $f_{1} \in L^{p}(0,T; L^{2}(0,L))$. Moreover, if $f_{2} \in L^{p}(0,T; L^{1}(0,L))$ such that $f_{2x} \in L^{1}(0,T; L^{2}(0,L))$ and $\omega \in \mathcal{J}$, with $\omega''(L)\neq 0$, satisfies \eqref{OIa}, then there exists a unique function $h \in L^{p}(0,T)$ such that the mild solution $u= S(u_{0}, h, f_{1} + f_{2x}, \widetilde{h})$ of \eqref{Kw} verifies the overdetermination condition \eqref{IO}.
\end{theorem}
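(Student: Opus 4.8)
The plan is to reduce the theorem to the homogeneous controllability result of Lemma \ref{lema3} by a superposition argument, exploiting the linearity of the solution operator $S$. First I would split the solution according to the data: by linearity of $S$, write
$$u = S(u_{0}, h, f_{1}+f_{2x}, \widetilde{h}) = v + w, \quad v := S(u_{0}, 0, f_{1}+f_{2x}, \widetilde{h}), \quad w := S(0, h, 0, 0).$$
Here $v$ absorbs all the prescribed data ($u_{0}$, $f$, and the four boundary traces $\widetilde{h}$) with the control switched off, while $w$ carries only the unknown control $h$ acting on the trace $u_{xx}(t,L)$.

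Next I would reduce the overdetermination condition to a homogeneous one. Introduce $q_{v}(t) := \int_{0}^{L} v(t,x)\omega(x)\,dx$. Since $v$ solves \eqref{Kw} with data satisfying the hypotheses of Lemma \ref{1.1} (in particular $f_{2x}\in L^{1}(0,T;L^{2}(0,L))$), that lemma yields $q_{v}\in W^{1,p}(0,T)$. Evaluating at $t=0$ and using $v(0,\cdot)=u_{0}$ together with the compatibility condition \eqref{OIa} gives $q_{v}(0)=\int_{0}^{L} u_{0}\omega\,dx=\varphi(0)$. Therefore the function $\psi := \varphi - q_{v}$ belongs to $W^{1,p}(0,T)$ and satisfies $\psi(0)=0$, i.e. $\psi\in\widetilde{W}^{1,p}(0,T)$.

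Now I would invoke Lemma \ref{lema3} with target $\psi$: there is a unique $h=\Gamma\psi\in L^{p}(0,T)$ whose associated solution $w=S(0,h,0,0)$ satisfies $\int_{0}^{L} w(t,x)\omega(x)\,dx=\psi(t)$ for all $t\in[0,T]$. Adding the two contributions,
$$\int_{0}^{L} u(t,x)\omega(x)\,dx = q_{v}(t)+\psi(t) = q_{v}(t)+\bigl(\varphi(t)-q_{v}(t)\bigr) = \varphi(t),$$
which is exactly \eqref{IO}. For uniqueness, if two controls $h_{1},h_{2}$ both produce solutions satisfying \eqref{IO}, then $w_{1}-w_{2}=S(0,h_{1}-h_{2},0,0)$ has $\int_{0}^{L}(w_{1}-w_{2})\omega\,dx\equiv 0$, i.e. $\Lambda(h_{1}-h_{2})=0$; since $\Lambda=\Gamma^{-1}$ is invertible by Lemma \ref{lema3}, we conclude $h_{1}=h_{2}$.

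The only delicate point is the bookkeeping in the second step: one must check that the regularity assumptions on $(u_{0},f_{1},f_{2},\widetilde{h})$ are precisely those required to apply Lemma \ref{1.1} to $v$, so that $q_{v}$ is genuinely $W^{1,p}$ in time, and that the compatibility relation \eqref{OIa} is exactly what forces $\psi(0)=0$, matching the domain $\widetilde{W}^{1,p}(0,T)$ of the operator $\Gamma$. Once this alignment is verified the result follows immediately from linearity and Lemma \ref{lema3}, with no new estimate beyond those two lemmas required.
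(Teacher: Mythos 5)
Your proposal is correct and follows essentially the same route as the paper: the paper likewise writes $\widehat{u}=S(u_{0},0,f_{1}+f_{2x},\widetilde{h})$, sets $\widehat{\varphi}=\varphi-Q(\widehat{u})\in\widetilde{W}^{1,p}(0,T)$ via Lemma \ref{1.1} and \eqref{OIa}, applies Lemma \ref{lema3} to obtain $h=\Gamma\widehat{\varphi}$, and superposes the two solutions. Your explicit uniqueness argument via the invertibility of $\Lambda$ is a welcome addition that the paper leaves implicit.
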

\begin{proof}
Here, consider $$S: L^{2}(0,L)\times L^{2}(0,L) \times L^{1}(0,T; L^{2}(0,L)) \times \mathcal{H} \longrightarrow X(Q_{T}),$$
with $\widehat{u}= S(u_{0}, 0, f_{1} + f_{2x}, \widetilde{h})$ mild solution of the system \eqref{Kw}.  Now, consider the application $\widehat{\varphi}= \varphi - Q(\widehat{u})$, where $\varphi \in \widetilde{W}^{1,p}(0,T).$ Lemma \ref{1.1} together with \eqref{OIa}, ensures that $\widehat{\varphi} \in \widetilde{W}^{1,p}(0,T)$. Thus, Lemma \ref{1.2} guarantees the existence of a unique $\Gamma \widehat{\varphi}= h \in L^{p}(0,T)$ such that the solution $v= S(0,h,0,0)$ of \eqref{Kw}  satisfies
$$
\int_{0}^{L}v(t,x)\omega(x)dx= \widehat{\varphi}(t), \  t \in [0,T].
$$

Thus, if $u= \widehat{u} + v= S(u_{0},h, f_{1} + f_{2x}, \widetilde{h})$, we have that $u$ is solution of \eqref{Kw} satisfying
$$ 
\int_{0}^{L}u(t,x)\omega(x)dx= \varphi(t), \quad t \in [0,T].
$$ 
So, the proof of the theorem is complete.
\end{proof}

\subsection{Nonlinear result} In this section we are able to prove the first main result of this manuscript.

\begin{proof}[Proof of Theorem \ref{main}] In the assumptions of Theorem \ref{caso linear 1} consider $f_{1}= f$ and $f_{2}= \frac{v^{2}}{2}$, with $v \in X(Q_{T})$ and $f \in L^{1}(0,T;L^{2}(0,L))$. Note that $v^2\in \ C(0,T;L^{1}(0,L)) \hookrightarrow L^{p}(0,T;L^{1}(0,L))$, for $p \in [1, \infty]$. So, using the following inequality 
\begin{equation*}
\displaystyle \sup_{x \in [0,L]}|g(x)|^{2} \leq C(L)\bigl(\|g'\|_{L^{2}(0,L)}\|g\|_{L^{2}(0,L)} + \|g\|^{2}_{L^{2}(0,L)}\bigr).
\end{equation*}
we have $v^2\in L^2(Q_T)$ and 
\begin{equation*}
\begin{split}
\|v^{2}\|_{L^{2}(Q_{T})}\leq&\int_0^T\left(\sup_{x \in [0,L]}|v(t,\cdot)|^{2} \int_{0}^{L}|v(t,x)|^{2}dxdt\right)^{\frac{1}{2}}\\
\leq& \ C(L)\left(\int_{0}^{T}\|v(t,\cdot)\|^{3}_{L^{2}(0,L)}.\|v_{xx}(t,\cdot)\|_{L^{2}(0,L)}dt + \int_{0}^{T}\|v(t,\cdot)\|^{4}_{L^{2}(0,L)}dt\right)^{\frac{1}{2}}\\
\leq& \ C(L)\left(\displaystyle\sup_{t \in [0,T]}\|v(t,\cdot)\|^{3}_{L^{2}(0,L)}\int_{0}^{T}\|v_{xx}(t,\cdot)\|_{L^{2}(0,L)}dt + T\sup_{t \in [0,T]}\|v(t,\cdot)\|^{4}_{L^{2}(0,L)}\right)^{\frac{1}{2}}\\
\leq&\ C(L)\left(\sup_{t \in [0,T]}\|v(t,\cdot)\|^{\frac{3}{2}}_{L^{2}(0,L)}\left(\int_{0}^{T}\|v_{xx}(t,\cdot)\|_{L^{2}(0,L)}dt\right)^{\frac{1}{2}} + T^{\frac{1}{2}}\sup_{t \in [0,T]}\|v(t,\cdot)\|^{2}_{L^{2}(0,L)}\right)\\
\leq&\ C(L)\left(\sup_{t \in [0,T]}\|v(t,\cdot)\|^{\frac{3}{2}}_{L^{2}(0,L)}\left(\int_{0}^{T}\|v_{xx}(t,\cdot)\|_{L^{2}(0,L)}dt\right)^{\frac{1}{2}} + T^{\frac{1}{2}}\|v\|^{2}_{X(Q_{T})}\right)\\
\leq & \ C(L) \left(\sup_{t \in [0,T]}\|v(t,\cdot)\|^{\frac{3}{2}}_{L^{2}(0,L)}\left(T^{\frac{1}{2}}\|v_{xx}\|_{L^{2}(0,T;L^{2}(0,L))}\right)^{\frac{1}{2}}+ T^{\frac{1}{2}}\|v\|^{2}_{X(Q_{T})}\right)\\
\leq & \ C(L) \left(T^{\frac{1}{4}}\sup_{t \in [0,T]}\|v(t,\cdot)\|_{L^{2}(0,L)}\left( \sup_{t \in [0,T]}\|v(t,\cdot)\|_{L^{2}(0,L)}\|v_{xx}\|_{L^{2}(Q_{T})} \right)^{\frac{1}{2}}+ T^{\frac{1}{2}}\|v\|^{2}_{X(Q_{T})}\right)\\
\leq &\ C(L)\left( T^{\frac{1}{4}}\|v\|_{X(Q_{T})}\left( \sup_{t \in [0,T]}\|v(t,\cdot)\|_{L^{2}(0,L)} + \|v_{xx}\|_{L^{2}(Q_{T})} \right)+ T^{\frac{1}{2}}\|v\|^{2}_{X(Q_{T})}\right)\\
=& \ C(L) \left(T^{\frac{1}{4}} +T^{\frac{1}{2}} \right)\|v\|^{2}_{X(Q_{T})},
\end{split}
\end{equation*}
showing that 
\begin{equation}\label{43}
\|v^{2}\|_{L^{2}(0,T;L^{2}(0,L))}= \|v^{2}\|_{L^{2}(Q_{T})}\leq C(L)(T^{\frac{1}{2}} + T^{\frac{1}{4}})\|v\|^{2}_{X(Q_{T})}.
\end{equation}

On the space $X(Q_T)$ define the application $\Theta: X(Q_{T}) \longrightarrow X(Q_{T})$ by  $$ \Theta v = S\left(u_{0}, \Gamma\left(\varphi - Q(S(u_{0},0, f - vv_{x}, \widetilde{h}))\right), f - vv_{x}, \widetilde{h}\right).$$
Let $\varphi \in W^{1,p}(0,T)$, $u_{0} \in L^{2}(0,L)$, $ \widetilde{h} \in \mathcal{H}$ be given such that $h_{i} \in L^{p}(0,T)$, for $i=1,2,3,4$  and  $ \ f \in L^{1}(0,T; L^{2}(0,L))$.  Applying the results of Lemma \ref{Zhang 2}, for $s=0$, Theorem \ref{caso linear 1} and inequality \eqref{43} we have, for $p=2$, that 
\begin{equation*}
\begin{split}
\|\Theta v\|_{X(Q_{T})}\leq&\ C(T)\left(\|u_{0}\|_{L^{2}(0,L)} + \|f\|_{L^{1}(0,T; L^{2}(0,L))} +  \|\widetilde{h}\|_{\mathcal{H}} \right.\\
&+ \left.\left\|vv_{x}\right\|_{L^{1}(0,T; L^{2}(0,L))} + \left\|\varphi - Q(S(u_{0},0, f - vv_{x}, \widetilde{h}))\right\|_{\widetilde{W}^{1,p}(0,T)} \right)\\
\leq&\ C(T)\left(\|u_{0}\|_{L^{2}(0,L)} + \|f\|_{L^{1}(0,T; L^{2}(0,L))} +  \|\widetilde{h}\|_{\mathcal{H}} \right.\\
&+\left.\|vv_{x}\|_{L^{1}(0,T; L^{2}(0,L))} + \|\varphi'\|_{L^{2}(0,T)} + \|q'\|_{L^{2}(0,T)}\right)\\
\leq&\ C(T)\left(\|u_{0}\|_{L^{2}(0,L)} + \|f\|_{L^{1}(0,T; L^{2}(0,L))} +  \|\widetilde{h}\|_{\mathcal{H}} + \|\varphi'\|_{L^{2}(0,T)} \right.\\
&+\left.\|f\|_{L^{2}(0,T; L^{2}(0,L))} + \|vv_{x}\|_{L^{1}(0,T; L^{2}(0,L))} + \left\|\frac{v^{2}}{2}\right\|_{L^{2}(0,T;L^{1}(0,L))}\right)\\
\leq & \ C(T)\left(c_{0} + T^{\frac{1}{4}}\right)\|v\|^{2}_{X(Q_{T})},
\end{split}
\end{equation*}
and in a similar way
\begin{equation*}
\| \Theta v_{1} - \Theta v_{2}\|_{X(Q_{T})} \leq C(T) T^{\frac{1}{4}}(\|v_{1}\|_{X(Q_{T})} + \|v_{1}\|_{X(Q_{T})})\|v_{1} - v_{2} \|_{X(Q_{T})},
\end{equation*}
where $$c_{0}= \|u_{0}\|_{L^{2}(0,L)} + \|f\|_{L^{2}(0,T; L^{2}(0,L))} +  \|\widetilde{h}\|_{\mathcal{H}} + \|\varphi'\|_{L^{2}(0,T)}$$ and the constant $C(T)$ is nondecreasing with increasing $T$.

Fix $c_{0}$ and  consider $T_{0} > 0$ such that $8C(T_{0})^{2}T_{0}^{\frac{1}{4}}c_{0} \leq 1$ and then, for any $T \in (0, T_{0}]$, we can choose  $$r \in \left[2C(T)c_{0}, \frac{1}{\left(4C(T)T^{\frac{1}{4}}\right)}\right].$$ By the other hand, for a fixed $T > 0$, pick $$r= \frac{1}{\left(4C(T,L)T^{\frac{1}{4}}\right)}$$ and $$c_{0} \leq \gamma= \frac{1}{\left(8C(T)^{2}T^{\frac{1}{4}}\right)},$$ so in both cases
$$C(T)c_{0} \leq \frac{r}{2} \quad  \text{and}\quad C(T)T^{\frac{1}{4}}r \leq \frac{1}{4},$$
thus $\Theta$ is a contraction on $B(0,r) \subset X(Q_{T})$. In this way, there exists a unique fixed point $$u=\Theta u\in X(Q_{T})$$ satisfying \eqref{int1} and the integral condition \eqref{1.2} when  $$h = \Gamma\left(\varphi - Q(S(u_{0},0,f -uu_{x},\widetilde{h}))\right).$$ As the uniqueness can be obtained in the standard way, Theorem \ref{main} is proved.
\end{proof}

\section{internal control}\label{Sec4}
This section is dedicated to prove the internal controllability result for system \eqref{int1} when $f$ assumes a special form, namely $f(t,x)=f_0(t)g(t,x)$. First, we prove that the linear system associated to \eqref{int1} is controllable in the sense proposed in the introduction, finally, we extend this result for the full system using a fixed point theorem, as made in the previous section. 

\subsection{Linear result} The next lemma is a key point to prove one of the main results of this manuscript and can be read as follows.
\begin{lemma}\label{lss} Assuming that  $h= \widetilde{h}= u_{0}= 0$ in the system \eqref{Kw}, for $g \in C(0,T; L^{2}(0,L))$ and $\omega \in \mathcal{J}$ be given such that
\begin{equation}\label{g0}
\left|\int_{0}^{L}g(t,x)\omega(x)dx\right| \geq g_{0} > 0, \ \forall \ t \in [0,T],
\end{equation}
and $\varphi \in \widetilde{W}^{1,p}(0,T)$, for some $p \in [1, \infty]$, there exists a unique function $f_{0}= \Gamma(\varphi) \in L^{p}(0,T)$ such that the solution $u = S(0,0,f_{0}g,0)$ of \eqref{Kw} satisfies the overdeternination condition \eqref{IO}. Moreover, $$\Gamma : \widetilde{W}^{1,p}(0,T) \longmapsto L^{p}(0,T)$$ is a linear bounded operator  and its norm is nondecreasing with increasing T.
\end{lemma}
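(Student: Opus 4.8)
The plan is to transplant the operator-inversion scheme of Lemma~\ref{lema3} to the internal source $f=f_0g$, the one genuinely new difficulty being that $f_0$ now enters the energy identity \eqref{1.2} \emph{coupled} to the solution itself. Throughout I would write $g_1(t)=\int_0^L g(t,x)\omega(x)\,dx$ and $M=\sup_{t\in[0,T]}\|g(t,\cdot)\|_{L^2(0,L)}$, which is finite since $g\in C([0,T];L^2(0,L))$. Because $\omega\in L^2(0,L)$, the function $g_1$ is continuous on $[0,T]$, and hypothesis \eqref{g0} forces $|g_1(t)|\ge g_0>0$; hence $1/g_1\in C([0,T])$ with $\|1/g_1\|_{L^\infty(0,T)}\le 1/g_0$, which is exactly what legitimises dividing by $g_1$ in the definition of the operator $A$.

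First I would define $\Lambda\colon L^p(0,T)\to\widetilde W^{1,p}(0,T)$ by $(\Lambda f_0)(t)=\int_0^L u(t,x)\omega(x)\,dx$ with $u=S(0,0,f_0g,0)$. Since $\|f_0g\|_{L^p(0,T;L^2(0,L))}\le M\|f_0\|_{L^p(0,T)}$ and $L^p(0,T)\subset L^1(0,T)$, Lemma~\ref{ZH} applies and Lemma~\ref{1.1} (with $f_1=f_0g$, $f_2=0$ and all boundary data zero) gives $\Lambda f_0\in W^{1,p}(0,T)$ with $(\Lambda f_0)(0)=0$, so $\Lambda$ maps into $\widetilde W^{1,p}(0,T)$, is linear, and is bounded by \eqref{norma de q'}. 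In this homogeneous situation \eqref{q'} collapses to $q'(t)=f_0(t)g_1(t)+\int_0^L u(t,x)[\omega'+\omega'''-\omega''''']\,dx$. Defining $A\colon L^p(0,T)\to L^p(0,T)$ by $(Af_0)(t)=\varphi'(t)/g_1(t)-\frac{1}{g_1(t)}\int_0^L u(t,x)[\omega'+\omega'''-\omega''''']\,dx$, the identity \eqref{q'} together with $\varphi(0)=q(0)=0$ yields, exactly as in Claim~\ref{claim1}, the equivalence $\varphi=\Lambda f_0\iff f_0=Af_0$; here $Af_0\in L^p(0,T)$ because $\varphi'/g_1\in L^p$ and the integral term is controlled by $\|u\|_{X(Q_T)}$.

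The main obstacle is the contraction estimate for $A$, which is more delicate than in Lemma~\ref{lema3}: the source $(\mu_1-\mu_2)g$ couples the control difference to the solution inside the quadratic identity \eqref{1.2}. For $\mu_1,\mu_2\in L^p(0,T)$ set $u_i=S(0,0,\mu_ig,0)$ and $u=u_1-u_2=S(0,0,(\mu_1-\mu_2)g,0)$. Lemma~\ref{lem1.1} with vanishing boundary datum gives $\|u(t,\cdot)\|_{L^2(0,L)}^2\le 2M\int_0^t|(\mu_1-\mu_2)(\tau)|\,\|u(\tau,\cdot)\|_{L^2(0,L)}\,d\tau$, after applying Cauchy--Schwarz to $\int_0^L g\,u\,dx$. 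A Gronwall-type argument on $y(t)=\|u(t,\cdot)\|_{L^2(0,L)}$ (differentiate $\int_0^t|\mu_1-\mu_2|\,y$ and use $y\le\sqrt{2M(\cdot)}$, so that $\tfrac{d}{dt}\sqrt{\,\cdot\,}\le M|\mu_1-\mu_2|$) then produces the linear-in-time bound $\|u_1(t,\cdot)-u_2(t,\cdot)\|_{L^2(0,L)}\le M\int_0^t|(\mu_1-\mu_2)(\tau)|\,d\tau$, the internal analogue of \eqref{25}. Consequently $|(A\mu_1-A\mu_2)(t)|\le C_2\int_0^t|(\mu_1-\mu_2)(\tau)|\,d\tau$ with $C_2=C(\|\omega\|_{H^5(0,L)})\,M/g_0$.

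To close the contraction uniformly in $p\in[1,\infty]$ I would pass to the weighted norm $\|e^{-\gamma t}\,\cdot\,\|_{L^p(0,T)}$. Writing the previous bound as a convolution, $e^{-\gamma t}|(A\mu_1-A\mu_2)(t)|\le C_2\int_0^t e^{-\gamma(t-\tau)}e^{-\gamma\tau}|(\mu_1-\mu_2)(\tau)|\,d\tau$, and applying Young's inequality with $\|e^{-\gamma\cdot}\|_{L^1(0,T)}\le 1/\gamma$ gives $\|e^{-\gamma t}(A\mu_1-A\mu_2)\|_{L^p(0,T)}\le \tfrac{C_2}{\gamma}\|e^{-\gamma t}(\mu_1-\mu_2)\|_{L^p(0,T)}$, so the choice $\gamma=2C_2$ makes $A$ a contraction for \emph{every} $p\in[1,\infty]$ (this is precisely why the admissible range here is wider than in Theorem~\ref{main}, and why $\|\varphi'\|_{L^1}$ is the natural quantity in Theorem~\ref{main1}). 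Banach's fixed point theorem then furnishes a unique $f_0=\Gamma\varphi$ with $Af_0=f_0$, equivalently $\Lambda f_0=\varphi$, which is \eqref{IO}; the linearity and boundedness of $\Gamma=\Lambda^{-1}$ follow from the Banach theorem exactly as in Lemma~\ref{lema3}, and the nondecreasing dependence of its norm on $T$ is obtained by the same device of extending $\varphi$ by the constant $\varphi(T)$ beyond $T$.
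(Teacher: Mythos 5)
Your proposal is correct and follows essentially the same route as the paper: define $\Lambda=Q\circ S\circ G$ with $G(f_0)=f_0g$, set up the fixed-point operator $A$ by dividing by $g_1(t)$ (legitimate since \eqref{g0} gives $\|1/g_1\|_{L^\infty}\le 1/g_0$), derive the key bound $\|u_1(t,\cdot)-u_2(t,\cdot)\|_{L^2(0,L)}\lesssim \|g\|_{C([0,T];L^2(0,L))}\|\mu_1-\mu_2\|_{L^1(0,t)}$ from \eqref{1.2}, prove contractivity in an exponentially weighted norm, and conclude via Banach's fixed point and bounded inverse theorems. The only (harmless) deviations are computational: you obtain the $L^1$-in-time bound by a Gronwall-type square-root argument where the paper divides directly in the quadratic inequality, and you close the weighted estimate with Young's convolution inequality uniformly in $p\in[1,\infty]$ where the paper treats $p<\infty$ and $p=\infty$ separately.
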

\begin{proof}With this hypothesis in hand, define the following linear application $$G: L^{p}(0,T)  \longrightarrow L^{1}(0,T; L^{2}(0,L))$$ by $G(f_{0})= f_{0}g,$ which satisfies 
$$\|G(f_{0})\|_{L^{1}(0,T;L^{2}(0,L))} \leq T^{\frac{p-1}{p}}\|g\|_{C([0,T];L^{2}(0,L))}\|f_{0}\|_{L^{p}(0,T)}$$
Now, considering the mapping $\Lambda= Q\circ S \circ G : L^{p}(0,T) \longrightarrow \widetilde{W}^{1,p}(0,T)$ as  $$ (\Lambda f_{0})(t)=\int_{0}^{L}u(t,x)\omega(x)dx,$$ where $u= S(0,0,f_{0}g,0)$, since $Q$, $S$ and $G$ are linear and bounded operators, we have that $\Lambda$ is a bounded linear operator. Additionally, using Lemma \ref{1.1}  and the continuity of the operator $S$, $\Lambda$ acts boundedly from the spaces  $L^{p}(0,T)$ to the space $\widetilde{W}^{1,p}(0,T)$.

Note that $\varphi=\Lambda f_0$, for $f_0 \in L^p(0,T)$, means that the function $f_0$ gives the desired solution of our control problem. So, with this in hand define the operator $A : L^{p}(0,T) \longrightarrow L^{p}(0,T)$ by \begin{equation*} (Af_{0})(t)= \frac{\varphi'(t)}{g_{1}(t)} - \frac{1}{g_{1}(t)}\int_{0}^{L}u(t,x)(\omega' + \omega''' - \omega''''')dx, \end{equation*} where $u= S(0,0,f_{0}g,0)$ and $$ g_{1}(t)= \int_{0}^{L}g(t,x)\omega(x)dx,$$ for all $t \in [0,T]$.  Thus, as done in Lemma \ref{lema3} we have that $\Lambda f_{0}= \varphi$  if and only if $f_{0}= Af_{0}$.  

Now, we are concentrating on proving the following.

\begin{claim}\label{claim3}
$A$ is a contraction.
\end{claim}

For the case when $1 \leq p < \infty $, consider $f_{01}$ and $f_{02}$ in $L^{p}(0,T)$,  $u_{1}= S(0,0,f_{01}g,0)$ and $u_{2}= S(0,0,f_{02}g,0)$. Thus,
$$
Af_{01} - Af_{02}= -\frac{1}{g_{1}}\int_{0}^{L}(u_{1} - u_{2})(\omega' + \omega''' - \omega''''')dx.
$$
Moreover, rewrite the following functions as  $u= u_{1}- u_{2}$ and $f_{1}= f_{01}- f_{02}$, thanks to the inequality \eqref{1.2}, holds that
\begin{equation}\label{123}
\|u_{1}(t,\cdot)- u_{2}(t,\cdot)\|_{L^{2}(0,L)} \leq 2\|g\|_{C(0,T; L^{2}(0,L))}\|f_{01} - f_{02}\|_{L^{1}(0,t)}, \ \forall t \in [0,T].
\end{equation}

Let $\gamma > 0$, for $p<+\infty$ in the analog way as we did in \eqref{25b}, we have 
\begin{equation*}
\begin{split}
\|e^{-\gamma t}(Af_{01}- Af_{02})\|_{L^{p}(0,T)} \leq& \frac{1}{g_{0}}\|\omega\|_{H^{5}(0,L)}\left(\int_{0}^{T}e^{-\gamma pt}\|u_{1}(t,\cdot)- u_{2}(t,\cdot)\|^{p}_{L^{2}(0,L)}dt\right)^{\frac{1}{p}}\\
\leq&\ C(p)\left(\int_{0}^{T}e^{-\gamma pt}\int_{0}^{T}|f_{01}(\tau) - f_{02}(\tau)|^{p}d\tau dt\right)^{\frac{1}{p}}\\
\leq& \frac{2^{\frac{1}{p}}C(p)}{(\gamma p)^{\frac{1}{p}}}\|f_{01} - f_{02}\|_{L^{p}(0,T)}\left(\frac{1 - e^{-\gamma pT}}{2}\right)^{\frac{1}{p}}\\
\leq&\ \frac{C_{1}}{(\gamma p)^{\frac{1}{p}}}\|e^{-\gamma t}(f_{01} - f_{02})\|_{L^{p}(0,T)}
\end{split}
\end{equation*}
where $C_{1} = C_{1}(T,p,\|\omega\|_{H^{5}(0,L)}, g_{0}, \|g\|_{C(0,T; L^{2}(0,L))})$. So, just take $\gamma = \frac{(2C_{1})^{p}}{p}$ and  $A$ is a contraction in this case.

Now, consider $p= \infty$. For $\gamma > 0$, we get similarly to what was done \eqref{25c} that
\begin{equation*}
\begin{split}
\sup_{t \in [0,T]}e^{-\gamma t}\left|((Ah_{1})(t)- (Ah_{2})(t))\right)|\leq &\ 
2C(g_{0},\|g\|,\|\omega\|_{H^{5}(0,L)})\sup_{t \in [0,T]}e^{-\gamma t}\|f_{01} - f_{02}\|_{L^{1}(0,t)}\\
\leq&\ 2C(g_{0},\|g\|, \|\omega\|_{H^{5}(0,L)})\sup_{t \in [0,T]}\int_{0}^{t}e^{\gamma(\tau - t)}|f_{01}(\tau) - f_{02}(\tau)|d\tau\\
\leq & \ 2C(g_{0},\|g\|, \|\omega\|_{H^{5}(0,L)})\|f_{01} - f_{02}\|_{L^{\infty}(0,T)}\sup_{t \in [0,T]}\frac{1}{\gamma}[1 - e^{-\gamma t}]\\
\leq &\ \frac{1}{\gamma}2C(g_{0},\|g\|, \|\omega\|_{H^{5}(0,L)})\|f_{01} - f_{02}\|_{L^{\infty}(0,T)}\\
\leq & \frac{C_{1}}{\gamma}\|f_{01} - f_{02}\|_{L^{\infty}(0,T)},
\end{split}
\end{equation*}
where $C_{1} = C_{1}(T,\|\omega\|_{H^{5}(0,L)}, g_{0}, \|g\|_{C(0,T; L^{2}(0,L))})$. Thus, if $\gamma = 2C_{1}$ we have $A$ is contractions, finishing the case $p=+\infty$, proving Claim \ref{claim3}.

Therefore, for each  $\varphi \in \widetilde{W}^{1,p}(0,T)$, there exists a unique  $f_{0} \in L^{p}(0,T)$ such that $f_{0}= A(f_{0})$, i.e., $\varphi= \Lambda(f_{0})$.  It follows that $\Lambda$ is invertible, and its inverse $\Gamma :L^{p}(0,T) \longmapsto \widetilde{W}^{1,p}(0,T)$ is a continuous operator thanks to the Banach's theorem. Additionally, we have 
$$ \|\Gamma(\varphi)\|_{L^{p}(0,T)} \leq C(T)\|\varphi'\|_{L^{p}(0,T)}.$$
The end of the proof follows in the same way as in Lemma \ref{lema3}, and so, the proof is complete. 
\end{proof}

Let us now enunciate a result concerning the internal controllability for the linear system. The result is the following one. 

\begin{theorem}\label{caso linear 2} Assume that $p \in [1, \infty]$, $u_{0} \in L^{2}(0,L)$, $ h \in L^{max\{2,p\}}(0,T)$, $\widetilde{h} \in \mathcal{H},$ with $ h_{i} \in L^{p}(0,T),$ for $i=1,2,3,4$ and  $f_{2} \in L^{p}(0,T; L^{1}(0,L))$ such that $f_{2x} \in L^{1}(0,T; L^{2}(0,L))$. If $ g \in C([0,T]; L^{2}(0,L))$, $\omega \in \mathcal{J}$, $\omega''(L)\neq 0$ and $ \varphi \in W^{1,p}(0,T)$ satisfies \eqref{OIa} and
$$
\left| \int_{0}^{L}g(t,x)\omega(x)dx\right| \geq g_{0} > 0, \ \forall t \in [0,T]
$$
then there exists a unique function $f_{0} \in L^{p}(0,T)$ such that the solution $u= S(u_{0}, h, f_{0}g + f_{2x}, \widetilde{h})$ of \eqref{Kw} satisfies
$$
\int_{0}^{L}u(t,x)\omega(x)dx= \varphi(t), \ t \in [0,T].
$$
\end{theorem}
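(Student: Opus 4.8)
The plan is to prove Theorem \ref{caso linear 2} exactly as Theorem \ref{caso linear 1} was proved, exploiting linearity to split the solution into a ``free'' part that carries all the prescribed data and a ``controlled'' part driven by $f_0g$, and then invoking the internal-control Lemma \ref{lss} in place of the boundary-control Lemma \ref{lema3}. First I would introduce the background solution
$$
\widehat{u}= S(u_{0}, h, f_{2x}, \widetilde{h}),
$$
obtained by setting the control $f_0$ to zero. This is well posed in $X(Q_{T})$ by Lemma \ref{ZH}; note that Lemma \ref{ZH} demands $h\in L^{2}(0,T)$ while the $W^{1,p}$ regularity below will demand $h\in L^{p}(0,T)$, which is exactly why the hypothesis asks for $h\in L^{\max\{2,p\}}(0,T)$. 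Then I set $\widehat{\varphi}= \varphi - Q(\widehat{u})$, where $(Qv)(t)=\int_{0}^{L}v(t,x)\omega(x)\,dx$ is the functional already used in Theorem \ref{caso linear 1}.

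The next step is to check that $\widehat{\varphi}\in\widetilde{W}^{1,p}(0,T)$. Applying Lemma \ref{1.1} to $\widehat{u}$ with $f_{1}=0$ and $f_{2}$ as given (so that $f=f_{2x}$, which is licit since $f_{2x}\in L^{1}(0,T;L^{2}(0,L))$ and $f_{2}\in L^{p}(0,T;L^{1}(0,L))$), one gets $Q(\widehat{u})\in W^{1,p}(0,T)$, hence $\widehat{\varphi}\in W^{1,p}(0,T)$. Moreover the compatibility condition \eqref{OIa} gives
$$
Q(\widehat{u})(0)=\int_{0}^{L}u_{0}(x)\omega(x)\,dx=\varphi(0),
$$
so that $\widehat{\varphi}(0)=0$ and therefore $\widehat{\varphi}\in\widetilde{W}^{1,p}(0,T)$, as required by Lemma \ref{lss}.

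With $\widehat{\varphi}$ in hand, I would invoke Lemma \ref{lss}, whose applicability is guaranteed by the nondegeneracy hypothesis $|g_{1}(t)|=\bigl|\int_{0}^{L}g(t,x)\omega(x)\,dx\bigr|\geq g_{0}>0$ appearing in \eqref{g0} (this is what makes the operator $A$, which divides by $g_{1}$, well defined). Lemma \ref{lss} produces a unique $f_{0}=\Gamma(\widehat{\varphi})\in L^{p}(0,T)$ such that $v=S(0,0,f_{0}g,0)$ satisfies $Q(v)=\widehat{\varphi}$. Setting $u=\widehat{u}+v$, linearity of \eqref{Kw} gives $u=S(u_{0},h,f_{0}g+f_{2x},\widetilde{h})$ and
$$
Q(u)=Q(\widehat{u})+Q(v)=Q(\widehat{u})+\widehat{\varphi}=\varphi,
$$
which is precisely the overdetermination condition \eqref{IO}. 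Uniqueness of $f_{0}$ follows from the uniqueness already established in Lemma \ref{lss}, since $\widehat{u}$ is fixed by the data and any admissible control must realize $Q(v)=\widehat{\varphi}$.

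The substantive work has in fact already been carried out in Lemmas \ref{1.1} and \ref{lss}, so the present argument is a genuine superposition and I expect no serious obstacle here. The only point requiring care is the regularity bookkeeping in the second paragraph: one must confirm that the free data $(u_0,h,f_{2x},\widetilde h)$ meet the exact hypotheses of Lemma \ref{1.1} (in particular the interplay of $L^2$ and $L^p$ integrability of $h$, which motivates the exponent $\max\{2,p\}$) and that \eqref{OIa} truly kills the value of $\widehat\varphi$ at $t=0$; these are the steps where a mismatch in function spaces would break the passage to $\widetilde{W}^{1,p}(0,T)$.
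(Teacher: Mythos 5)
Your proposal is correct and follows essentially the same route as the paper: decompose $u=\widehat{u}+v$ with $\widehat{u}$ carrying the data $(u_0,h,f_{2x},\widetilde{h})$, use Lemma \ref{1.1} and \eqref{OIa} to place $\widehat{\varphi}=\varphi-Q(\widehat{u})$ in $\widetilde{W}^{1,p}(0,T)$, and apply Lemma \ref{lss} to produce $f_0$. The only (immaterial) discrepancy is a sign convention: the paper's proof takes $\widehat{u}=S(u_0,h,-f_{2x},\widetilde{h})$ and ends with $f_0g-f_{2x}$, whereas your choice of $+f_{2x}$ actually matches the theorem's statement more closely.
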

\begin{proof}
Pick $\widehat{u}= S(u_{0}, h, -f_{2x}, \widetilde{h})$ solution of \eqref{Kw} with $f=-f_{2x}$. Now, consider $\widehat{\varphi}= \varphi - Q(\widehat{u})$ with $\varphi \in W^{1,p}(0,T).$ By Lemma \ref{1.1}, together with \eqref{OIa}, follows that 
$\widehat{\varphi} \in \widetilde{W}^{1,p}(0,T).$ Therefore, due to the Lemma \ref{lss}, there exists a unique  $\Gamma \widehat{\varphi}= f_{0} \in L^{p}(0,T)$ such that the solution  $v= S(0,0,f_{0}g,0)$ of \eqref{Kw} with $f=f_0g$ satisfies 
$$
\int_{0}^{L}v(t,x)\omega(x)dx= \widehat{\varphi}(t), \  t \in [0,T].
$$
Thus, taking  $u= \widehat{u} + v= S(u_{0},h, f_{0}g - f_{2x}, \widetilde{h})$, we have that  $u$ solution of \eqref{Kw} have the following property
$$\int_{0}^{L}u(t,x)\omega(x)dx= \varphi(t),$$
for $t \in [0,T]$, showing the result.
\end{proof}

\subsection{Nonlinear result}In this section we are able to prove the second main result of this article.

\begin{proof}[Proof of Theorem \ref{main1}] In the assumption of Theorem \ref{caso linear 2}, pick $f_2=-\frac{v^2}{2}$ for an arbitrary $v\in X(Q_T)$. Now, define the mapping $\Theta: X(Q_{T}) \longrightarrow X(Q_{T})$ as follows  $$\Theta v = S\left(u_{0},h, \Gamma\left(\varphi - Q(S(u_{0},h, - vv_{x}, \widetilde{h}))\right)g - vv_{x}, \widetilde{h}\right).$$
In the same way as done in the proof of Theorem \ref{main}, we have
\begin{equation*}
 \|\Theta v\|_{X(Q_{T})} \leq C(T)\left(c_{0} + T^{\frac{1}{4}}\|v\|^{2}_{X(Q_{T})}\right)
\end{equation*}
and
\begin{equation*}
\| \Theta v_{1} - \Theta v_{2}\|_{X(Q_{T})} \leq C(T)T^{\frac{1}{4}}(\|v_{1}\|_{X(Q_{T})} + \|v_{1}\|_{X(Q_{T})})\|v_{1} - v_{2} \|_{X(Q_{T})}.
\end{equation*}
With this in hand we can proceed as the Theorem \ref{main} to conclude that $\Theta$ is a contraction and there exists a unique fixed point $u \in X(Q_{T})$ such that  $f_{0} = \Gamma\bigl(\varphi - Q(S(u_{0},h, -uu_{x},\widetilde{h}))\bigr)$.
\end{proof}

\subsection*{Acknowledgments:} 

R. de A. Capistrano--Filho was supported by CNPq grant 408181/2018-4, CAPES-PRINT grant  88881.311964/2018-01, MATHAMSUD grants  88881.520205/2020-01, 21-MATH-03 and Propesqi (UFPE). L. S. de Sousa acknowledges support from CAPES-Brazil and CNPq-Brazil. This work is part of the PhD thesis of L. S. de Sousa at Department of Mathematics of the Universidade Federal de Pernambuco.

\end{document}